\newtheorem{theorem}{Theorem}
\newtheorem{mclaim}[theorem]{Main Claim}
\newtheorem{lemma}[theorem]{Lemma}
\newtheorem{obs}[theorem]{Observation}
\theoremstyle{definition} 
\newtheorem{definition}[theorem]{Definition}
\theoremstyle{remark}
\newtheorem{myremark}[theorem]{Remark}
\newtheorem{conclusion}[theorem]{Conclusion}
\newtheorem{notation}[theorem]{Notation}
\newcommand{\rest}{{\upharpoonright}}
\newcommand{\conc}{{}^\frown\!}
\newcommand{\bool}{{\bf B}}
\newcommand{\rlg}{{\rm lg}}
\newcommand{\rds}{{\rm ds}}
\newcommand{\rrk}{{\rm rk}}
\newcommand{\st}{{such that}}
\newcommand{\sequ}{{sequence}}
\newcommand{\incr}{{increasing}}
\newcommand{\Wlog}{{Without loss of generality}}
\newcommand{\wolog}{{without loss of generality}}
\newcommand{\dec}{{decreasing}}
\newcommand{\then}{{\underline{then}}}
\newcommand{\uif}{{\underline{if}}}
\newcommand{\uIf}{{\underline{If}}}
\newcommand{\uiff}{{\underline{iff}}}
\newcommand{\bb}{\mathbb}
\newcommand{\mat}{\mathcal}
\newcommand{\structA}{\mathfrak A}
\newcommand{\structB}{\mathfrak B}
\newcommand{\ordone}{<^1_{\ell x}}
\newcommand{\ordtwo}{<^2_{\ell x}}
\newcommand{\order}{<^*_{\ell x}}
\newcommand{\seq}[1]{\langle #1 \rangle}
\newcommand{\set}[1]{\{ #1 \}}
\newcommand{\size}[1]{| #1 |}
\newcommand{\logic}[1]{{\bb L_{{#1},{#1}}}}
\title{ Uniforming $n$-place functions on $T\subseteq {\rm ds}(\alpha)$ }
\author{Esther Gruenhut}
\address{Institute of Mathematics
 The Hebrew University of Jerusalem,
 Jerusalem 91904, Israel.
 {\tt\ e-mail: esth@math.huji.ac.il}}
\author{Saharon Shelah}
\address{Institute of Mathematics
 The Hebrew University of Jerusalem,
 Jerusalem 91904, Israel
 and  Department of Mathematics
 Rutgers University
 New Brunswick, NJ 08854, USA
 {\tt\ e-mail: shelah@math.huji.ac.il}}
\thanks{Research supported by the United States-Israel Binational
Science Foundation (Grant no. 2002323). Publication 909 in
Shelah's archive.
\\ AMS 200 classification 03E05, 05C15.
\\ \sl{Key words: Set Theory, partition relation, well founded trees, scattered order types}}
\begin{document}

\begin{abstract}
In this paper the Erd\H os-Rado theorem is generalized to the class of well founded trees.
We define an equivalence relation on the class $\rds(\infty)^{<\aleph_0}$ ( finite sequences of decreasing sequences of ordinals) with $\aleph_0$ equivalence classes, and for $n<\omega$ a notion of $n$-end-uniformity for  a colouring  of $\rds(\infty)^{<\aleph_0}$ with $\mu$ colours. We then  show that for every ordinal $\alpha$, $n<\omega$ and cardinal $\mu$ there is an ordinal $\lambda$ so that for any colouring $c$ of $T=\rds(\lambda)^{<\aleph_0}$ with $\mu$ colours,  $T$ contains $S$ isomorphic to $\rds(\alpha)$ so that $c\rest S^{<\aleph_0}$ is $n$-end uniform. For $c$ with domain $T^n$ this is equivalent to finding $S\subseteq T$ isomorphic to $\rds(\alpha)$ so that $c\rest S^{n}$ depends only on the equivalence class of the defined relation, so in particular $T\rightarrow(\rds(\alpha)) ^n_{\mu,\aleph_0}$. We also draw a conclusion on colourings of $n$-tuples from a scattered linear order.
\end{abstract}

\maketitle
This paper is a natural continuation of  \cite{796} in which Shelah and Komj\'ath prove that for any scattered order type $\varphi$ and cardinal $\mu$ there exists a scattered order type $\psi$ \st\  $\psi\rightarrow[\varphi]^n_{\mu,\aleph_0}$. This was proved by a theorem on colourings of well founded trees. By Hausdorff's characterization (see \cite{haus} and \cite{rosen} ) every scattered order type can be embedded in a well founded tree, so we can deduce  a  natural generalization of their theorem to the $n$-ary case, i.e for every scattered order type $\varphi$, $n<\omega$, and cardinal $\mu$ there is a scattered order type $\psi$ \st\ $\psi\rightarrow(\varphi)^n_{\mu,\aleph_0}$.
\par\noindent

We start with a few definitions.
\begin{definition} \label{ds}
For an ordinal $\alpha$ we define $\rds (\alpha)=\{\eta:\eta$ a \dec\ \sequ\ of ordinals
$<\alpha\}$. By $\rds(\infty)$ we mean the class of decreasing sequences of ordinals.
\end{definition}
 We say $T \subseteq \rds(\infty)$ is a tree when $T$ is  non-empty and  closed under initial segments. $T,S$ will denote trees. For $S\subseteq T\subseteq\rds(\infty)$ we say that $S$ is a subtree of $T$ if it is also a tree. We use the following notation:
\begin{notation}
 \begin{enumerate}
 \item For $\eta,\nu\in\rds(\infty)$ by $\eta\cap\nu$ we mean $\eta\rest\ell$ where $\ell$ is  maximal \st\  $\eta\rest\ell=\nu\rest\ell$.
 \item For $\eta\in \rds(\infty)$ and a tree $T\subset\rds(\infty)$ we define $$\eta\conc T=\set{\rho:\rho\trianglelefteq\eta \vee(\exists\nu\in T)(\rho=\eta\conc\nu)}$$
 \end{enumerate}
\end{notation}

Note that for $\eta\in\rds(\infty\backslash\set{\seq{}})$ and $\set{\seq{}}\subsetneq T\subseteq\rds(\infty)$ if $\eta(\rlg(\eta)-1)>\sup\set{\rho(0):\rho\in T}$ then $\eta\conc T\subseteq\rds(\infty)$.
\begin{definition} We define the following four binary relations on $\rds(\infty)$:
\label{ord}
\begin{enumerate}
\item Let $\ordone$ be the two place relation
on $\rds (\infty)$ defined by $\eta \ordone \nu$ iff  one of the following: $(\exists \ell)(\eta(\ell)<\nu (\ell)$ or $\eta \rest\ell=\nu \rest \ell)$
\underline{or} $\eta\triangleleft \nu$.
\item Let $\ordtwo$ be the two place relation on $\rds (\infty)$ defined by
$\eta \ordtwo \nu$ iff one of the following:
$(\exists \ell) (\eta (\ell)<\nu (\ell) $ or $\eta \rest \ell=\nu \rest \ell) \hbox{ \underline{or} }
\nu \triangleleft \eta.$
\item $<^*_{\ell x}=\ordone \cap \ordtwo$.
\item Let $<^3$ be the  two place relation on $\rds(\infty)$  defined by $\eta<^3\nu$ iff one of the following:  $\eta\triangleleft\nu$ or  for the maximal $\ell$ \st\ $\eta\rest\ell=\nu\rest\ell$ if $\ell$ is even then $\eta(\ell)<\nu(\ell)$ and if $\ell$ is odd then $\eta(\ell)>\nu(\ell)$.
\end{enumerate} \end{definition}
It is easily verified that   $\ordone,\ordtwo$ and $<^3$ are complete
orders of $\rds(\infty)$, and therefore $\order$ is a partial order.
The following remark refers to their order types defined by
$\ordone,\ordtwo$ and $<^3$ on $\rds(\infty)$ or $\rds(\alpha)$.

\begin{obs}
\label{obs-ord}
\begin{enumerate}
\item $\ordone,\ordtwo$ are well orderings for $\rds(\infty)$.
\item $(\rds(\alpha),<^3)$ is a scattered linear order type for every ordinal $\alpha$.
\item Every scattered linear order type can be embedded in $(\rds(\alpha),<^3)$ for some ordinal $\alpha$.
\end{enumerate}
\end{obs}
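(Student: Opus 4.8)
The plan is to treat the three parts separately: the first two by computing order types directly, the third by induction on Hausdorff rank. Throughout I use that, since the ordinals are well-founded, every $\eta\in\rds(\infty)$ is a \emph{finite} sequence, so $\ordone,\ordtwo,<^3$ really are lexicographic-type orders on finite decreasing sequences; their linearity is the ``easily verified'' fact already recorded.

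For (1), for $\ordone$ (initial segments counting as small) I would exhibit the Cantor-normal-form map $f(\eta)=\omega^{\eta(0)}+\omega^{\eta(1)}+\cdots+\omega^{\eta(\rlg(\eta)-1)}$. Since $\eta$ is strictly decreasing this is a genuine normal form, and uniqueness of CNF together with the usual comparison of normal forms gives $\eta\ordone\nu\iff f(\eta)<f(\nu)$; thus $f$ is an order embedding of $(\rds(\infty),\ordone)$ into the ordinals, so $\ordone$ is a well-order. For $\ordtwo$ (initial segments counting as large) the clean route is a recursion for the order type $\theta(\alpha)$ of $(\rds(\alpha),\ordtwo)$: grouping the sequences by their first entry $\beta$, each such block is a copy of $(\rds(\beta),\ordtwo)$ while $\langle\rangle$ is the maximum, giving $\theta(\alpha)=\big(\sum_{\beta<\alpha}\theta(\beta)\big)+1$; transfinite induction shows $\theta(\alpha)$ is an ordinal, so $\ordtwo$ is a well-order.

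For (2) the key is a block decomposition of the alternating order. Writing $<^{3'}$ for the variant of $<^3$ in which even and odd levels are interchanged, grouping the sequences of $\rds(\alpha)$ by their first entry and shifting the level parity by one on the tails yields
\[ (\rds(\alpha),<^3)\cong 1+\sum_{\beta<\alpha}(\rds(\beta),<^{3'}),\qquad (\rds(\alpha),<^{3'})\cong 1+\sum_{\beta<\alpha}^{*}(\rds(\beta),<^{3}), \]
where the first sum is indexed by $\alpha$ and the second by its reverse $\alpha^*$ (on the $<^{3'}$ side level $0$ is compared in reverse). Since the scattered orders are closed under well-ordered and reverse-well-ordered sums (Hausdorff, \cite{haus},\cite{rosen}), a simultaneous transfinite induction on $\alpha$ shows both $(\rds(\alpha),<^3)$ and $(\rds(\alpha),<^{3'})$ are scattered, the base case $\alpha=0$ being a single point.

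For (3) I would invoke Hausdorff's characterization of the scattered orders as the least class containing the one-point order and closed under well-ordered and reverse-well-ordered sums, and induct on the rank in that hierarchy, calling $L$ \emph{good} if it embeds into $(\rds(\alpha),<^3)$ for some $\alpha$. The first step is an equivalence lemma: $L$ embeds into some $(\rds(\alpha),<^3)$ iff it embeds into some $(\rds(\alpha),<^{3'})$, proved by placing a given embedding inside a single first-entry block of $\rds(\alpha+1)$ and applying a decomposition from (2), which trades $<^3$ for $<^{3'}$ at the cost of one level. Granting this, a well-ordered sum of good orders embeds into a $<^3$-structure by sending its $i$-th summand, realized in some $(\rds(\beta_i),<^{3'})$, into the block with first entry $\gamma_i$ for a strictly increasing choice $\gamma_i\ge\beta_i$ (legitimate since the blocks of $(\rds(\alpha),<^3)$ occur in increasing order of first entry); a reverse-well-ordered sum is handled symmetrically inside a $<^{3'}$-structure, whose blocks occur in decreasing order of first entry. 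The main obstacle is exactly this parity bookkeeping: the decomposition in (2) forces a well-ordered sum and a reverse sum onto opposite sides of the $<^3$/$<^{3'}$ dichotomy, so the induction cannot be run for $<^3$ alone, and the equivalence lemma is what repairs this; getting the directions of the two sums and the parity shift correct is the delicate point, while everything else reduces to closure of scattered orders and routine choices of first entries.
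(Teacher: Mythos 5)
Your proof is correct, but it takes a genuinely different route from the paper's in all three parts, so a comparison is worthwhile. For (1) the paper treats both orders at once and works directly on the class $\rds(\infty)$: given a nonempty $A$, it greedily builds $a_0=\min\{\eta(0):\eta\in A\}$, $a_n=\min\{\eta(n):\eta\in A,\ \eta\rest n=\seq{a_k:k<n}\}$; the process halts because the $a_n$ decrease, and the resulting $\bar a$ is the $\ordtwo$-least element of $A$, a suitable initial segment of it being the $\ordone$-least. Your Cantor-normal-form embedding for $\ordone$ and the recursion $\theta(\alpha)=\big(\sum_{\beta<\alpha}\theta(\beta)\big)+1$ for $\ordtwo$ are both valid and in fact give more (explicit order types); you should only add the one-line remark needed to pass from the sets $\rds(\alpha)$ to the proper class $\rds(\infty)$ for $\ordtwo$: if $\nu\ordtwo\eta$ and $\eta\neq\seq{}$ then $\nu(0)\leq\eta(0)$, so any nonempty subclass has its least element already inside some set $\rds(\eta(0)+1)$. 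For (2) the paper argues by contradiction: given an embedding of $\bb Q$ it takes the least coordinate $\ell$ on which the range is not constant, finds an interval of $\bb Q$ landing in a single block $\eta_{q_i}\rest(\ell+1)\conc\rds(\beta_i)$, and invokes the induction hypothesis; your proof instead uses the block decomposition together with closure of scattered orders under (reverse-)well-ordered sums. Your version is actually more careful on one point: the tails in such a block are compared with shifted parity, i.e. the block carries $<^{3'}$ rather than $<^3$ (and $<^{3'}$ is not the reverse of $<^3$, since the $\triangleleft$ clause is unchanged), a subtlety the paper glosses over; your simultaneous induction on $<^3$ and $<^{3'}$ is exactly what makes this step airtight. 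For (3) the paper has a slicker device that avoids $<^{3'}$ entirely: it embeds $(\rds(\alpha),<^3)\times\beta$ and $(\rds(\alpha),<^3)\times\beta^*$ into $(\rds(\alpha+\beta\cdot2+1),<^3)$ via the length-two prefixes $\seq{\alpha+\beta+\gamma+1,\alpha+\beta}\conc\eta$ and $\seq{\alpha+\beta\cdot2,\alpha+\beta+\gamma}\conc\eta$, so that level $0$ (even) indexes increasing sums, level $1$ (odd) indexes decreasing ones, and the tail parity is untouched. Your one-level blocks plus the equivalence lemma (trading $<^3$ for $<^{3'}$ at the cost of one level) achieve the same thing, consistently with your treatment of (2), at the price of the parity bookkeeping you describe; both versions also rely on absorbing all summands into a single $(\rds(\alpha),<^3)$ for $\alpha$ large, using $\rds(\beta)\subseteq\rds(\alpha)$ as ordered sets.
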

\begin{proof}
\begin{enumerate}
\item Let $\emptyset\neq A\subseteq\rds(\infty)$, we define by induction on $n<\omega$ an element $a_n$ in the following manner $a_0=\min\set{\eta(0):\eta\in A}$, assume $a_0,\cdots,a_{n-1}$ have been chosen so that $\seq{a_k:k<n}\in\rds(\infty)$ and  for every $\eta\in A$ $\seq{a_k:k<n}\leq^2_{\ell x}\eta\rest{n}$ (if $\rlg(\eta)\leq n$ then $\eta\rest{n}=\eta$). Now choose $a_{n}=\min\set{\eta(n):\eta\in A\wedge \eta\rest n=\seq{a_k:k<n}}$, if that set isn't empty.  As the \sequ\ derived in the above manner is a decreasing sequence of ordinals it is finite, say $a_0,\cdots a_{n-1}$ have been defined and $a_{n}$ cannot  be defined, we will show that $\bar a =\seq{a_k:k<n}$ is the minimal element of $A$ with respect to $\ordtwo$. By the definition of the sequence there is an  $\eta\in A$ so that $\eta\rest n=\bar a$, if $\rlg(\eta)>n$ then we could have defined $a_n$, so $\eta=\bar a$ and in particular $\bar a\in A$, and for every $\eta\in A\backslash\set{\bar a}$ we have $\bar a\ordtwo\eta$. Let $n_*=\min\set{m:\bar a \rest m\in A}$ so $\bar a\rest n_*$ is the minimal $\ordone$ element in $A$.
\item The proof is by induction on $\alpha$. Assume that $(\rds(\beta),<^3)$ is a scattered linear order type for every $\beta<\alpha$, and assume towards contradiction that $\bb Q$ can be embedded in $(\rds(\alpha),<^3)$, $q\mapsto \eta_q$. Let $C=\set{\ell:(\exists p,q\in\bb Q )(\eta_p(\ell)\neq\eta_q(\ell))}$, $\ell=\min C$ and $\Gamma=\set{\beta:(\exists q\in\bb Q)(\eta_q(\ell)=\beta)}$. \Wlog\ $\ell$ is even and for $\beta_0=\min\Gamma$, $\beta_1=\min\Gamma\backslash\set{\beta_0}$  there are $q_0<q_1\in \bb Q$ so that $\eta_{q_i}(\ell)=\beta_i$, $i=0,1$. Now $(q_0,q_1)=B_0\cup B_1$ where $B_i=\set{p\in(q_0,q_1):\eta_p(\ell)=\beta_i}$. For some $i\in\set{0,1}$ the set $B_i$ contains an interval of $\bb Q$ and is embedded in  $(\eta_{q_i}\rest(\ell+1)\conc\rds(\beta_i),<^3)$ but this would imply that $\bb Q$ can be embedded in $(\rds(\beta_i),<^3)$ which is a contradiction to the induction hypothesis.
\item By Hausdorff's characterization it is enough to show  for ordinals $\alpha$ and  $\beta$ that both $A_{\alpha,\beta}=(\rds(\alpha),<^3)\times\beta$ and   $A_{\alpha,\beta^*}=(\rds(\alpha),<^3)\times\beta^*$ can be embedded in $(\rds(\alpha+\beta\cdot2+1),<^3)$. The embedding is given as follows, for $(\eta,\gamma)\in A_{\alpha,\beta}$ we have $(\eta,\gamma)\mapsto \seq{\alpha+\beta+\gamma+1,\alpha+\beta}\conc\eta$, and for $(\eta,\gamma)\in A_{\alpha,\beta^*}$ we have $(\eta,\gamma)\mapsto \seq{\alpha+\beta\cdot2,\alpha+\beta+\gamma}\conc\eta$.
\end{enumerate}
\end{proof}

\begin{definition}
\label{1.3}
 For trees $T_1,T_2\subset\rds(\infty)$, $f:T_1\to T_2$ is an embedding of $T_1$ into $T_2$ if $f$ preserves level, $\triangleleft$ and  $\ordone$ (or equivalently, $\ordtwo,\order$ or $<^3$).
\end{definition}
\begin{obs}
\label{obs-embedding}
For trees $T_1,T_2\subset\rds(\infty)$, if $f:T_1\to T_2$ preserves level and $\triangleleft$ then in order to determine whether $f$ is an embedding it is enough to check for $\eta\in T_1$ and ordinals $\gamma_1<\gamma_2$ \st\ $\nu_i=\eta\conc\seq{\gamma_i}\in T_1$ ($i=1,2$) that $f(\nu_1)\order f(\nu_2)$.
\end{obs}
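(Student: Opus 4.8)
The plan is to establish the two implications packaged in ``it is enough.'' Necessity is immediate: if $f$ is an embedding then it preserves $\order$ by \Def\ \ref{1.3}, and for $\gamma_1<\gamma_2$ the nodes $\nu_1=\eta\conc\seq{\gamma_1}$ and $\nu_2=\eta\conc\seq{\gamma_2}$ are $\triangleleft$-incomparable with $\nu_1\order\nu_2$, whence $f(\nu_1)\order f(\nu_2)$. So the real content is the converse: assuming $f$ preserves level and $\triangleleft$ and that the displayed sibling condition holds, I would show that $f$ preserves $\ordone$, which by \Def\ \ref{1.3} is precisely what makes $f$ an embedding.

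First I would dispose of $\triangleleft$-comparable pairs. Unwinding \Def\ \ref{ord}, the relation $\order=\ordone\cap\ordtwo$ can hold only between $\triangleleft$-incomparable nodes (for $\triangleleft$-comparable $\eta,\nu$ the condition defining $\ordone$ and the one defining $\ordtwo$ point in opposite directions), while on $\triangleleft$-comparable pairs $\ordone$ is decided by $\triangleleft$ alone, the shorter node being $\ordone$-smaller. Since $f$ preserves level and $\triangleleft$, it sends $\triangleleft$-comparable pairs to $\triangleleft$-comparable pairs with the same direction, so $\ordone$ is preserved there for free. There remains a $\triangleleft$-incomparable pair, and \wolog\ $\eta\order\nu$. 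Writing $\rho=\eta\cap\nu$, I would set $\gamma_1=\eta(\rlg(\rho))$ and $\gamma_2=\nu(\rlg(\rho))$, so that $\gamma_1<\gamma_2$ (this is what $\eta\order\nu$ says at the first coordinate $\rlg(\rho)$ of difference), and let $\eta'=\rho\conc\seq{\gamma_1}\trianglelefteq\eta$ and $\nu'=\rho\conc\seq{\gamma_2}\trianglelefteq\nu$ be the immediate successors of $\rho$ lying below $\eta$ and $\nu$; both belong to $T_1$ since it is a tree.

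The crux, and the step I expect to be the main obstacle, is to show that $f$ preserves the meet, i.e.\ $f(\eta)\cap f(\nu)=f(\rho)$. The inclusion $f(\rho)\trianglelefteq f(\eta)\cap f(\nu)$ comes for free from $\triangleleft$-preservation, but $\triangleleft$-preservation by itself does \emph{not} forbid $f$ from merging the two branches issuing from $\rho$; ruling this out is exactly where the sibling hypothesis enters. I would apply the displayed condition with $\rho$ in the role of the splitting node to get $f(\eta')\order f(\nu')$; in particular $f(\eta')$ and $f(\nu')$ are $\triangleleft$-incomparable, so their meet is a \emph{proper} initial segment of each. Since $f$ preserves level, $\rlg(f(\eta'))=\rlg(\rho)+1$, so this proper meet has length at most $\rlg(\rho)$; as it also extends $f(\rho)$, which has length $\rlg(\rho)$, it must equal $f(\rho)$. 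Thus $f(\eta')$ and $f(\nu')$ already diverge at coordinate $\rlg(\rho)$, and since they are initial segments of $f(\eta)$ and $f(\nu)$ agreeing with them at that coordinate, it follows that $f(\eta)\cap f(\nu)=f(\rho)$.

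With meet-preservation in hand the conclusion is a one-line computation: $f(\eta)$ and $f(\nu)$ first differ at coordinate $\rlg(\rho)$, where $f(\eta)(\rlg(\rho))=f(\eta')(\rlg(\rho))<f(\nu')(\rlg(\rho))=f(\nu)(\rlg(\rho))$ by $f(\eta')\order f(\nu')$, so $f(\eta)\order f(\nu)$, matching $\eta\order\nu$. Running this over all $\triangleleft$-incomparable pairs shows $f$ preserves $\order$, and together with the comparable case $f$ preserves $\ordone$; by \Def\ \ref{1.3} $f$ is an embedding, which completes the converse.
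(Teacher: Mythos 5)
Your proof is correct. Note, however, that the paper offers no proof of this observation at all --- it is stated as a routine fact and left to the reader --- so there is nothing of the authors' to compare your argument against; what you have written is precisely the verification they omit. You identify the genuine content correctly: level- and $\triangleleft$-preservation dispose of the $\triangleleft$-comparable pairs for free, and the only nontrivial point is that the sibling hypothesis rules out $f$ merging the two branches above a splitting node, i.e.\ it forces $f(\eta)\cap f(\nu)=f(\eta\cap\nu)$, after which the lexicographic comparison at the first coordinate of difference transfers through $f$. Your handling of that step (the meet of $f(\eta')$ and $f(\nu')$ is proper, has length at most $\rlg(\eta\cap\nu)$, and extends $f(\eta\cap\nu)$, hence equals it) is exactly right, and the reduction of $\ordone$-preservation to $\order$-preservation on incomparable pairs is sound since $\ordone$, $\ordtwo$ and $\order$ all coincide on pairs with a genuine first difference.
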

As $T\subseteq\rds(\infty)$ is well founded, i.e there are no infinite branches, it is natural to define a rank function. in the following definition $\rrk_{T,\mu}$ isn't the standard rank function but for $\mu=1$ we get a similar definition to the usual definition of a rank on a well founded tree.
\begin{definition}
For a tree $T\subset\rds(\infty)$ and cardinal $\mu$ define ${\rrk}_{T,\mu} (\eta):\rds(\infty)\to\set{-1}\cup\mathrm{Ord}$  by induction on $\alpha $ as follows:
\label{rank}
\begin{enumerate}
\item[(a)] ${\rm rk}_{T,\mu} (\eta) \geq 0$ iff $\eta \in T$.
\item[(b)] ${\rm rk}_{T,\mu} (\eta) \geq \alpha+1$ \uiff\
$\mu\leq \size{\{\gamma: \eta\conc\langle\gamma\rangle\in T \wedge
{\rm rk}_{T,\mu}(\eta\conc\langle\gamma\rangle)\geq\alpha\}}
$.
  \item[(c)] ${\rm rk}_{T,\mu} (\eta) \geq \delta$ limit \uiff\
$(\forall \alpha<\delta)( {\rm rk}_{T,\mu} (\eta) \geq \alpha)$.
\end{enumerate}
We say that $\rrk_{T,\mu}(\eta)=\alpha$ iff $\rrk_{T,\mu}(\eta)\geq\alpha$ but
$\rrk_{T,\mu}(\eta)\ngeq\alpha+1$.
\\ Denote $\rrk_{T,\mu}(T)=\rrk_{T,\mu}(\seq{})$, and $\rrk_T(\eta)=\rrk_{T,1}(\eta)$.
\end{definition}
\begin{definition}\label{reduced-rank}
 For a tree $T\subset\rds(\infty)$, $\eta\in T$ and cardinals $\mu,\lambda$ we define the reduced
 rank ${\rrk}_{T,\mu}^\lambda (\eta)=\min\set{\lambda,\rrk_{T,\mu}(\eta)}$.
\end{definition}
We first note a few properties of the rank function.
\begin{obs}
\label{obs-rank} For $\eta\in T\subset\rds(\infty)$ and an ordinal $\alpha$ we have:
\begin{enumerate}
\item  For cardinals $\mu\leq\mu'$ we have
    $\rrk_{T,\mu}(\eta)\geq\rrk_{T,\mu'}(\eta)$, and in
    particular $\rrk_{T}(\eta)\geq\rrk_{T,\mu}(\eta)$
\item $\rrk_T(\eta)=\cup\set{\rrk_T(\eta\conc\seq{\gamma})+1:\eta\conc\seq{\gamma}\in T}.$
\item $\rrk_{\rds(\alpha)}(\seq{})=\alpha$.
\item \uIf\ ${\rm rk}_{T,\mu} (\eta)\geq \alpha$,  $\mu\geq\alpha$
\then\  we can embed $\eta\conc\rds(\alpha)$ into $T$, so that $\rho\mapsto\rho$ for $\rho\trianglelefteq\eta$.
\end{enumerate}
\end{obs}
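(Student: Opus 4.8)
\emph{Items (1)--(3) are quick inductions on $\alpha$.} For (1), assuming $\mu\leq\mu'$ I would show $\rrk_{T,\mu'}(\eta)\geq\alpha\Rightarrow\rrk_{T,\mu}(\eta)\geq\alpha$: the base ($\alpha=0$) and limit clauses are immediate from Definition \ref{rank}, and at a successor the set $\{\gamma:\eta\conc\seq\gamma\in T\wedge\rrk_{T,\mu'}(\eta\conc\seq\gamma)\geq\alpha\}$ is contained, by the induction hypothesis, in the corresponding set for $\mu$, so the latter has size $\geq\mu'\geq\mu$ and clause (b) gives $\rrk_{T,\mu}(\eta)\geq\alpha+1$. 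For (2), since $\rrk_T=\rrk_{T,1}$, clause (b) with $\mu=1$ reads: $\rrk_T(\eta)\geq\alpha+1$ iff some child $\eta\conc\seq\gamma\in T$ has $\rrk_T(\eta\conc\seq\gamma)\geq\alpha$; this unwinds (with $\cup\emptyset=0$ for leaves) to the stated supremum identity. For (3), the children of $\seq{}$ in $\rds(\alpha)$ are the $\seq\gamma$ for $\gamma<\alpha$, and $\{\nu:\seq\gamma\conc\nu\in\rds(\alpha)\}=\rds(\gamma)$; as the rank of a node depends only on the subtree below it, the induction hypothesis gives $\rrk_{\rds(\alpha)}(\seq\gamma)=\gamma$, and then (2) yields $\rrk_{\rds(\alpha)}(\seq{})=\cup\{\gamma+1:\gamma<\alpha\}=\alpha$.

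\emph{The substance is (4).} First I would reduce to $\eta=\seq{}$: the value $\rrk_{T,\mu}(\eta)$ depends only on the subtree $T^{[\eta]}:=\{\nu:\eta\conc\nu\in T\}$ and equals $\rrk_{T^{[\eta]},\mu}(\seq{})$, so it suffices to embed $\rds(\alpha)$ into $T^{[\eta]}$ and then prepend $\eta$ (the identity on $\rho\trianglelefteq\eta$). I proceed by induction on $\alpha$, uniformly in all trees. For each $\xi<\alpha$, clause (b) applied to $\rrk_{T,\mu}(\seq{})\geq\xi+1$ (which holds since $\xi+1\leq\alpha$) shows that $A_\xi:=\{\gamma:\seq\gamma\in T\wedge\rrk_{T,\mu}(\seq\gamma)\geq\xi\}$ has cardinality $\geq\mu\geq\alpha$; these sets are $\subseteq$-decreasing in $\xi$. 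I then produce a strictly increasing transversal $\seq{\delta_\xi:\xi<\alpha}$ with $\delta_\xi\in A_\xi$ by letting $\delta_\xi$ be the $\xi$-th element of $A_\xi$ in its increasing enumeration $a_\xi$: this is defined because $\mathrm{otp}(A_\xi)\geq|A_\xi|\geq\alpha>\xi$, and it increases because for $B\subseteq A$ the enumeration of $B$ dominates that of $A$ pointwise, whence $a_\xi(\xi)\geq a_\xi(\xi'+1)>a_\xi(\xi')\geq a_{\xi'}(\xi')$ for every $\xi'<\xi$. Finally, since $\{\nu:\seq\xi\conc\nu\in\rds(\alpha)\}=\rds(\xi)$ and $\rrk_{T,\mu}(\seq{\delta_\xi})\geq\xi$ with $\mu\geq\alpha>\xi$, the induction hypothesis embeds $\rds(\xi)$ into $T^{[\seq{\delta_\xi}]}$ via some $f_\xi$, and I glue by $\seq{}\mapsto\seq{}$ and $\seq\xi\conc\nu\mapsto\seq{\delta_\xi}\conc f_\xi(\nu)$. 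By Observation \ref{obs-embedding} it then remains only to check preservation of level, $\triangleleft$ and sibling order: the first two are immediate from the gluing, and sibling order holds at the top level because $\xi\mapsto\delta_\xi$ is increasing (so $\seq{\delta_{\xi'}}\order\seq{\delta_\xi}$ for $\xi'<\xi$) and inside each branch because $f_\xi$ is an embedding.

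\emph{The main obstacle} is exactly this transversal step. The naive greedy choice --- the least element of $A_\xi$ above all previously chosen $\delta_{\xi'}$ --- is awkward to keep well-defined at limit stages, and the clean device is that the $\xi$-th element of the shrinking sets $A_\xi$ is automatically strictly increasing. This is precisely where the hypothesis $\mu\geq\alpha$ enters essentially: it forces $\mathrm{otp}(A_\xi)\geq\alpha$, so that the $\xi$-th element exists for every $\xi<\alpha$. With only $\mu=1$ the tree may be tall and thin, the $A_\xi$ can be singletons, and no such transversal need exist, which is why part (4) genuinely requires the large-$\mu$ rank rather than the ordinary one.
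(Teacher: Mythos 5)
Your argument is correct and, for the items the paper actually proves ((3) and (4)), follows essentially the same route: induction on $\alpha$, a strictly increasing transversal through the $\subseteq$-decreasing sets of successors of rank $\geq\xi$, and gluing the embeddings supplied by the induction hypothesis via Observation \ref{obs-embedding}, with your reduction to $\eta=\seq{}$ being a cosmetic repackaging of the paper's direct work with $\eta\conc\rds(\alpha)$. The only local difference is the transversal step: the paper uses precisely the greedy choice you call awkward, namely $\gamma_\beta=\min\set{\gamma\in C_\beta:(\forall\beta'<\beta)(\gamma>\gamma_{\beta'})}$, and shows this set is non-empty by a cardinality-plus-minimality argument ($\size{C_\beta}\geq\mu>\size{\beta}$, and a $\gamma$ below some earlier $\gamma_{\beta'}$ would contradict the minimality of the $\ordone$-least such choice), whereas your ``$\xi$-th element of $A_\xi$'' device reaches the same increasing sequence through the pointwise-domination fact for enumerations; both mechanisms use the hypothesis $\mu\geq\alpha$ in exactly the same way.
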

\begin{proof}
\begin{enumerate}
 \item[$3$] The proof is by induction on $\alpha$.\\ For
     $\alpha=0$ this is obvious. Assume correctness for every
     $\beta<\alpha$.
     $\rds(\alpha)=\bigcup\limits_{\beta<\alpha}\set{\seq{\beta}\conc\nu:\nu\in\rds(\beta)}$.
     For every $\beta<\alpha, \nu\in\rds(\beta)$ we have
     $\rrk_{\rds(\alpha)}(\seq{\beta}\conc\nu)=\rrk_{\rds(\beta)}(\nu)$, therefore (the last
     equality is due to the induction hypothesis):\\ $\begin{array}{lcl} \cup\set{\rrk_{\rds(\alpha)}
     (\seq{\beta}\conc\nu)+1:\nu\in\rds(\beta)}&=&\cup\set{\rrk_{\rds(\beta)}
     (\nu)+1:\nu\in\rds(\beta)}\\ {}&=&\rrk(\rds(\beta))\\
     {}&=&\beta\end{array}$\\
We therefore have $\rrk(\rds(\alpha))=\cup\set{\beta+1:\beta<\alpha}=\alpha$
\item[$4$] The proof is by induction on $\alpha$.
\\For $\alpha=0$ there is nothing to prove.
\\ Assume correctness for every $\beta<\alpha$, and $\rrk_{T,\mu} (\eta)\geq \alpha$, $\alpha\leq\mu$.
For $\beta<\alpha$ let $C_\beta=\set{\gamma:\rrk_{T,\mu}(\eta\conc\seq{\gamma})\geq\beta}$, so $\size{C_\beta}\geq\mu$ and  $C_\beta\subseteq C_{\beta'}$ for $\beta'<\beta<\alpha$.  By induction on $\beta<\alpha$ we can choose an \incr\ \sequ\ of ordinals $\gamma_\beta$ \st\ $\gamma_\beta=\min \Gamma_\beta$ where $\Gamma_\beta=\set{\gamma\in C_\beta:(\forall\beta'<\beta)(\gamma>\gamma_{\beta'})}$. Assume towards contradiction that $\Gamma_\beta$ is empty, and let $C_\beta'=\seq{\gamma_{\beta'}:\beta'<\beta}\cap C_\beta$. For every $\gamma\in C_\beta\backslash C_\beta'$ (and there is such $\gamma$ as $\size{C_\beta}\geq\mu$ whereas $\size{C_\beta'}\leq\size{\beta}<\mu$) as $\gamma\notin\Gamma_\beta$ then there is $\beta'<\beta$ \st\ $\gamma<\gamma_{\beta'}$, assume $\beta'$ is minimal with this property, but that contradicts the choice of $\gamma_{\beta'}$.
\\By the induction hypothesis for every $\beta<\alpha$ there is $\varphi_\beta$ which embeds $(\eta\conc\seq{\gamma_\beta})\conc\rds(\beta)$ in $T$ so that $\varphi_\beta\rest\set{\rho:\rho\trianglelefteq\eta\conc\seq{\gamma_\beta}}=Id$. We now define $\varphi_\alpha:\eta\conc\rds(\alpha)\to T$ in the following manner, if $\rho\trianglelefteq\eta$ then $\varphi_{\alpha}(\rho)=\rho$, else $\rho=\eta\conc\nu$ for some $\nu\in\rds(\alpha)$, so there is $\beta<\alpha$ \st\ $\nu=\seq{\beta}\conc\nu_1$ with $\nu_1\in\rds(\beta)$, and we define $$\varphi_\alpha(\rho)= \varphi_\beta(\eta\conc\seq{\gamma_\beta}\conc\nu_1).$$
     $\varphi_\alpha$ obviously preserves level.
    \\For $\rho_1\triangleleft\rho_2$ in $\eta\conc\rds(\alpha)$ if $\rho_1\trianglelefteq\eta$ then obviously $\varphi_\alpha(\rho_1)\triangleleft\varphi_\alpha(\rho_2)$, and otherwise for some $\beta<\alpha$  we have $\rho_i=\eta\conc\seq{\beta}\conc\nu_i$, $i\in\set{1,2}$, $\nu_1\triangleleft\nu_2\in\rds(\beta)$, and  as $\varphi_\beta$ is an embedding  we have:
     $$
     \varphi_\alpha(\rho_1)=\varphi_\beta(\eta\conc\seq{\gamma_\beta}\conc\nu_1)\triangleleft \varphi_\beta(\eta\conc\seq{\gamma_\beta}\conc\nu_2)=\varphi_\alpha(\rho_2).
     $$
     For $\rho\in\eta\conc\rds(\alpha)$, $\gamma_1<\gamma_2$ ordinals \st\ for $i=1,2$ $\rho_i=\rho\conc\seq{\gamma_i}\in \eta\conc\rds(\alpha) $, necessarily $\eta\trianglelefteq\rho$ and there are $\beta_1\leq\beta_2<\alpha$, $\nu_i\in\rds(\beta_i)$ so that $\rho_i=\eta\conc\seq{\beta_i}\conc\nu_i$. If $\beta_1=\beta_2=\beta$ then $\nu_1\order\nu_2$, and as $\varphi_\beta$ is an embedding, $$\varphi_\alpha(\rho_1)=\varphi_\beta(\eta\conc\seq{\gamma_\beta}\conc\nu_1)\order \varphi_\beta(\eta\conc\seq{\gamma_\beta}\conc\nu_2)=\varphi_\alpha(\rho_2)
     $$
     On the other hand, if $\beta_1\neq\beta_2$ then $\varphi_\alpha(\rho_i)(\rlg(\eta))=\gamma_{\beta_i}$, and as $\gamma_{\beta_1}<\gamma_{\beta_2}$, also in this case $\varphi_\alpha(\rho_1)\order \varphi_\alpha(\rho_2)$.
     \\By Observation \ref{obs-embedding} $\varphi_\alpha$ is an embedding, and by definition $\varphi_\alpha\rest\set{\rho:\rho\trianglelefteq\eta}=Id$.
\end{enumerate}
\end{proof}

\medskip\medskip\noindent
The following theorem was was proved By Komj\'{a}th and Shelah in \cite{796}:
\begin{theorem}
\label{796}
Assume $\alpha$ is an ordinal and $\mu$ a cardinal. Set $\lambda=(\size{\alpha}^{\mu^{\aleph_0}})^+$, and let $F:\rm ds(\lambda^+)\rightarrow\mu$. Then there is an embedding $\varphi:\rds(\alpha)\to\rds(\lambda^+)$ and a function $c:\omega\rightarrow\mu$ \st\ for every $\eta\in\rm ds(\alpha)$ of length $n+1$
$$
F(\varphi(\eta))=c(n).
$$
\end{theorem}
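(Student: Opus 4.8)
The plan is to locate a single function $c:\omega\to\mu$ whose \emph{colour-path subtree}
$T_c=\{\eta\in\rds(\lambda^+): F(\eta\rest k)=c(k-1)\text{ for all }1\le k\le\rlg(\eta)\}$
has large rank, and then to straighten a copy of $\rds(\alpha)$ inside it by Observation~\ref{obs-rank}. The point of $T_c$ is that every $\rho\in T_c$ of length $n+1$ satisfies $F(\rho)=c(n)$ by construction, so if $\varphi:\rds(\alpha)\to T_c$ is \emph{any} embedding then, since embeddings preserve level, $F(\varphi(\eta))=c(\rlg(\eta)-1)$ comes for free. Thus it suffices to produce one $c$ with $\rrk_{T_c,\kappa}(\seq{})\ge\alpha$ for a fixed regular cardinal $\kappa\ge\alpha$, as Observation~\ref{obs-rank}(4) then supplies the embedding $\varphi$ (fixing $\seq{}$). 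I fix $\kappa=(\mu^{\aleph_0}+|\alpha|)^+$, which is regular, satisfies $\kappa>\alpha$, and lies strictly between $\mu^{\aleph_0}$ and $\lambda$.

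First I would record that the ambient tree has maximal rank: a computation parallel to Observation~\ref{obs-rank}(3) gives $\rrk_{\rds(\lambda^+),\kappa}(\seq{})=\lambda^+$, because the $\kappa$-rank of $\rds(\beta)$ is the largest $\zeta$ with $\kappa\cdot\zeta\le\beta$ and $\kappa<\lambda^+$. So the task is to transfer rank from $T=\rds(\lambda^+)$ down to some $T_c$. For a node $\eta$ and $c\in{}^\omega\mu$ let $T^\eta_c$ be the subtree of descendants of $\eta$ whose colours \emph{below} $\eta$ follow $c$, and write $r_c(\eta)=\rrk_{T^\eta_c,\kappa}(\eta)$, so $r_c(\seq{})=\rrk_{T_c,\kappa}(\seq{})$. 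The engine is a transfer at successor stages, proved by induction on the rank: if $\rrk_{T,\kappa}(\eta)\ge\xi+1$ then $\kappa$ children $\eta\conc\seq{\gamma}$ have $\rrk_{T,\kappa}(\eta\conc\seq{\gamma})\ge\xi$, each supplying a tail $c^{(\gamma)}$ with $r_{c^{(\gamma)}}(\eta\conc\seq{\gamma})\ge\xi$; since the pairs $(F(\eta\conc\seq{\gamma}),c^{(\gamma)})$ range over a set of size $\mu^{\aleph_0}<\kappa$ and $\kappa$ is regular, $\kappa$ of these children share one pair $(e,c^*)$, whence $c=\langle e\rangle\conc c^*$ gives $r_c(\eta)\ge\xi+1$.

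The successor step pushes rank into a \emph{single} colour-function, but it says nothing at limits: if $\rrk_{T,\kappa}(\eta)\ge\delta$ for a limit $\delta$, the witnesses $c_\xi$ ($\xi<\delta$) need not stabilise unless $\mathrm{cf}(\delta)>\mu^{\aleph_0}$, and already at $\delta=\omega$ a tree can have uncoloured rank $\ge\omega$ while every $T_c$ has finite rank. \textbf{This limit behaviour is the crux and the main obstacle.} The way through is to \emph{budget} the unavoidable losses: one proves, by induction on the rank, the contrapositive estimate that if $\rrk_{T_c,\kappa}(\seq{})<\alpha$ for every $c\in{}^\omega\mu$, then $\rrk_{T,\kappa}(\seq{})<(|\alpha|^{\mu^{\aleph_0}})^+=\lambda$. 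Here the exponent $\mu^{\aleph_0}$ is exactly the number of competing tail-functions to be amalgamated and the base $|\alpha|$ records that each colour-function's rank is capped below $\alpha$; the successor step consumes the pigeonhole above, while each limit of cofinality $\le\mu^{\aleph_0}$ spends part of a budget of size $|\alpha|^{\mu^{\aleph_0}}$. Since in fact $\rrk_{T,\kappa}(\seq{})=\lambda^+\ge\lambda$, this estimate is violated, so some $c$ satisfies $\rrk_{T_c,\kappa}(\seq{})\ge\alpha$.

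Finally, for that $c$, Observation~\ref{obs-rank}(4) applies (as $\kappa\ge\alpha$ and $\rrk_{T_c,\kappa}(\seq{})\ge\alpha$) and yields an embedding $\varphi:\rds(\alpha)\to T_c\subseteq\rds(\lambda^+)$; as noted, $F(\varphi(\eta))=c(\rlg(\eta)-1)=c(n)$ for every $\eta$ of length $n+1$, which is the conclusion. I expect the points outside the limit analysis to be routine: the $\kappa$-rank computation for $\rds(\lambda^+)$, the verification (via Observation~\ref{obs-embedding}, already used inside Observation~\ref{obs-rank}(4)) that $\varphi$ is a genuine $\order$-embedding into the ambient tree, and the trivial reductions when $\mu$ is finite or $\alpha<\omega$.
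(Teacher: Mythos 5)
First, a point of calibration: this paper contains no proof of Theorem \ref{796} at all; it is quoted from Komj\'ath and Shelah \cite{796}. So your attempt can only be judged against the content of the statement, not against an internal argument. Judged that way, your framework is correct as far as it goes: the colour-path trees $T_c$, the remark that any level-preserving embedding of $\rds(\alpha)$ into some $T_c$ yields the conclusion for free, the computation $\rrk_{\rds(\lambda^+),\kappa}(\seq{})=\lambda^+$, the successor-stage pigeonhole (there are only $\mu\cdot\mu^{\aleph_0}=\mu^{\aleph_0}<\kappa$ pairs $(e,c^*)$, and $\kappa$ is regular), and the closing appeal to Observation \ref{obs-rank}(4) with $\kappa>|\alpha|+\mu^{\aleph_0}$ are all sound. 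You have also diagnosed the difficulty exactly right: rank transfer into a single $c$ fails at limit ordinals of cofinality $\le\mu^{\aleph_0}$, and your $\delta=\omega$ example is a genuine counterexample to the naive induction.

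The gap is that at precisely this point the proposal stops being a proof. The sentence ``one proves, by induction on the rank, the contrapositive estimate\dots'' together with the budget metaphor is an assertion, not an argument: it names no ordinal-valued quantity that decreases, no well-founded relation along which it decreases, and no inductive statement that survives limits --- indeed, by your own counterexample, no statement indexed by rank alone can work. Since the lemma you assert (``$\rrk_{T_c,\kappa}(\seq{})<\alpha$ for all $c$ implies $\rrk_{T,\kappa}(\seq{})<\lambda$'') carries the entire weight of the theorem, while everything around it is routine, asserting it amounts to assuming what is to be proved. A completion along your lines needs a \emph{second} pigeonhole object and a new well-foundedness argument, roughly as follows. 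Attach to each node $t$ its whole profile $p_t:{}^\omega\mu\to\alpha+1$, $p_t(c)=\min\set{r_c(t),\alpha}$; there are at most $|\alpha|^{\mu^{\aleph_0}}$ profiles, which is exactly what $\lambda$ counts (your guess about the meaning of the exponent is close, but the budget is the set of \emph{functions} from tails to capped ranks, not a sum of losses). Then (i) if $\kappa$ children of $t$ share one colour $e$ and one profile $p$, then $r_{\langle e\rangle\conc c}(t)\ge\min\set{p(c),\alpha}+1$ for \emph{every} $c$ simultaneously, including limit values of $p(c)$ --- sharing the whole function $p$ is what replaces the impossible amalgamation of the witnesses $c_\xi$ at a limit; (ii) the resulting domination relation on profiles is well founded on the profiles occurring in $T$ unless some $r_c(t)\ge\alpha$: given an infinite descending chain with colours $e_1,e_2,\dots$, concatenate them into a single $c^{**}$ and bootstrap the inequalities $\xi_k\ge\min\set{\xi_{k+1},\alpha}+1$ to force some profile past $\alpha$; (iii) comparing $\rrk_{T,\lambda}$ with the rank of this well-founded relation (which is $<\lambda$, as there are $<\lambda$ profiles) contradicts $\rrk_{T,\lambda}(\seq{})=\lambda^+$. (Even then one must still arrange the copy at the root, since ``embedding'' in the sense of Definition \ref{1.3} preserves levels, whereas this argument a priori produces the copy above some node $t$.) These devices --- the profile pigeonhole, the limit transfer via shared profiles, and the colour-concatenation well-foundedness trick --- are the mathematical core of the theorem, and none of them appears in your proposal.
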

In what follows we will generalize the above theorem, in the process we will use infinitary logics.
For the readers convenience we include the following definitions.
\begin{definition}
\label{logics-def}
\begin{enumerate}
\item
For infinite cardinals $\kappa,\lambda$, and a vocabulary $\tau$ consisting of a list of relation
 and function symbols and their `arity' which is finite,  the infinitary language
 $\bb L_{\kappa,\lambda}$ for $\tau$  is defined in a similar manner to first order logic.
  The first subscript, $\kappa$, indicates that formulas have $<\kappa$ free variables and that we can join together $<\kappa$ formulas by
  $\bigwedge$ or $\bigvee$, the second subscript, $\lambda$, indicates that we can put $<\lambda$
  quantifiers together in a row.
\item Given a structure $\structB$ for $\tau$ we say that $\structA$ is  an
$\bb L_{\kappa,\lambda}$-elementary submodel (or substructure), and denote
$\structA\prec_{\kappa,\lambda}\structB$ or $\structA\prec_{\bb L_{\kappa,\lambda}}\structB$,
if $\structA$ is a substructure of $\structB$ in the regular manner,
and for any $\bb L_{\kappa,\lambda}$ formula $\varphi$ with $\gamma$ free variables and $\bar a\in{}^\gamma\size{\structA}$ we have
$$\structB\models\varphi(\bar  a)\Leftrightarrow \structA\models\varphi(\bar a).$$
The Tarski-Vought condition for  a substructure $\structA$ of $\structB$ to be an elementary submodel is that  for any
$\bb L_{\kappa,\lambda}$-formula $\varphi$ with parameters $\bar a\subseteq\structA$
we have
$$\structB\models\exists\bar x\varphi(\bar x\bar a)\Rightarrow \structA\models\exists\bar x\varphi(\bar x\bar a).$$
\item A set $X$ is transitive if for every $x\in X$ we have $x\subseteq X$.
\item For every set $X$ there exists a minimal transitive set,
which is denoted by $TC(X)$, such that $X\subseteq TC(X)$.
 \item  For an infinite regular cardinal $\kappa$ we define $$\mathcal
 H(\kappa)=\set{X:\size{TC(X)}<\kappa}.$$
\end{enumerate}
\end{definition}
\begin{myremark}
\label{logics-use}
In this paper the main use of infinitary logic will be in the following manner:
\begin{enumerate}
\item  $\tau$ will consist of the two binary relations $\in$ and $<^*$, so $\size{\logic{\kappa^+}(\tau)}=2^\kappa$.
\item If $\kappa'\leq\kappa, \lambda'\leq\lambda$ and $\structA\prec_{\kappa,\lambda}\structB$ then also $\structA\prec_{\kappa',\lambda'}\structB$.
\item $\prec_{\kappa,\lambda}$ is a transitive relation.
\item For an infinite cardinal $\mu$ let $\kappa=\mu^+$, $\lambda=2^\mu$, so $\kappa$ is regular and $\lambda^{<\kappa}=\lambda$. Recall that for a structure $\structB$ and $X\subseteq\|\structB\|$ \st\ $\size{X}+\tau\leq\lambda\leq\structB$ there is an elementary $\logic{\kappa}$ submodel $\structA$ of $\structB$ of cardinality $\lambda$ which includes $X$.
    \\For  further reference on this point see \cite{Di}.
\item If $\structA\prec_{\kappa,\kappa}\structB$ and $x$ is definable in $\structB$ over $\structA$ (i.e with parameters in $\structA$) by an $\logic{\kappa}$-formula, then it is also definable in $\structA$ by the same formula. In particular if $\structA\prec_{\kappa,\kappa}\structB$ and $X\subseteq\size{\structA}, \size{X}<\kappa$ then $X\in|\structA|.$
\end{enumerate}
\end{myremark}

\begin{definition}\label{similarity}
We say two finite \sequ\ $\seq{
\eta_\ell:\ell<n}, \seq{ \nu_\ell:\ell<n}$ are
\underline{similar} when:
\begin{enumerate}
\item[(a)] $\rlg(\eta_\ell)=\rlg(\nu_\ell)$
for $\ell<n$.
\item[(b)] $\rlg(\eta_\ell \cap \eta_m)=
\rlg(\nu_\ell \cap \nu_m)$ for $\ell,m<n$.
\item[(c)] $(\eta_\ell \ordtwo\eta_m) \equiv(\nu_\ell\ordtwo \nu_m)$
for $\ell,m<n$ (equivalently, we could use $\ordone$).
\end{enumerate}
\end{definition}
\begin{obs}
\label{obs-sim}
\begin{enumerate}
\item Similarity is an equivalence relation and the number of
equivalence classes of finite sequences is $\aleph_0$.
 \item $\langle \eta_1,\ldots, \eta_k,\nu'\rangle$, $\langle
\eta_1,\ldots,\eta_k,\nu''\rangle$ are similar if
\begin{enumerate}
\item $\eta_1 \ordtwo \eta_2 \ordtwo \ldots
\ordtwo \eta_k$
 \item $\eta_k \ordtwo \nu'$
 \item $\eta_k \ordtwo \nu''$
 \item $\rlg(\nu')=\rlg(\nu'')$
 \item $\rlg(\nu' \cap \eta_k)= \rlg(\nu''\cap \eta_k)$
\end{enumerate}
\end{enumerate}
\end{obs}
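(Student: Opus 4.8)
The plan is to treat the two parts separately: part (1) is pure bookkeeping, while part (2) is a short geometric computation inside the lexicographic tree order.

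\textbf{Part (1).} First I would observe that similarity is an equivalence relation because each of the three clauses of Definition \ref{similarity} asserts either the equality of a numerical invariant (the lengths in (a), the meet-lengths in (b)) or the logical equivalence of a relation (the $\ordtwo$-comparisons in (c)), and equality and the biconditional are reflexive, symmetric and transitive, so their conjunction is. For the cardinality count the point is that, by its very definition, the similarity class of $\seq{\eta_\ell:\ell<n}$ is determined by the finite packet of data
$$
\seq{\rlg(\eta_\ell):\ell<n},\quad \seq{\rlg(\eta_\ell\cap\eta_m):\ell,m<n},\quad \seq{\text{truth of }(\eta_\ell\ordtwo\eta_m):\ell,m<n}.
$$
For each fixed $n$ the first two entries are finite tuples of natural numbers and the third is a relation on $n\times n$, so there are only $\aleph_0$ possible packets; summing over $n<\omega$ keeps the total at $\aleph_0$, and it is clearly infinite (singletons of distinct lengths are pairwise non-similar).

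\textbf{Part (2).} Since the two sequences agree on their first $k$ coordinates, I would first note that every instance of (a)--(c) referring only to indices $\le k$ is automatic, and that for the last coordinate against $\eta_k$ the hypotheses (b)--(e) give precisely what is required: $\eta_k\ordtwo\nu'$ and $\eta_k\ordtwo\nu''$ both hold, $\rlg(\nu')=\rlg(\nu'')$, and $\rlg(\eta_k\cap\nu')=\rlg(\eta_k\cap\nu'')$. So the whole statement reduces to proving, for each $\ell<k$,
$$
\rlg(\eta_\ell\cap\nu')=\rlg(\eta_\ell\cap\nu'')\qquad\text{and}\qquad(\eta_\ell\ordtwo\nu')\equiv(\eta_\ell\ordtwo\nu''),
$$
the reverse comparisons $\nu\ordtwo\eta_\ell$ following since $\ordtwo$ is linear. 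Using that $\ordtwo$ is a linear order (Observation \ref{obs-ord}), the chain in (a) yields $\eta_\ell\ordtwo\eta_k$ for all $\ell<k$, and from $\eta_k\ordtwo\nu'$, $\eta_k\ordtwo\nu''$ I would extract, after discarding the degenerate case $\nu'\triangleleft\eta_k$ (which forces $\nu'=\nu''=\eta_k\rest j$ and hence identical sequences), that $\nu'\rest j=\eta_k\rest j=\nu''\rest j$ and $\eta_k(j)<\nu'(j),\nu''(j)$, where $j=\rlg(\eta_k\cap\nu')=\rlg(\eta_k\cap\nu'')$.

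The crux, and the one place requiring care, is a short case split comparing $i:=\rlg(\eta_\ell\cap\eta_k)$ with $j$, exploiting that $\nu'$ and $\nu''$ both copy $\eta_k$ below level $j$ and both exceed it at level $j$. If $i<j$, then $\eta_\ell$ already branches from $\eta_k$ (hence from $\nu'$ and $\nu''$, which agree with $\eta_k$ through level $i$) at level $i$ and in the same direction, so the meet-length is $i$ and both $\eta_\ell\ordtwo\nu'$, $\eta_\ell\ordtwo\nu''$ hold. If $i\ge j$, then $\eta_\ell\rest j=\eta_k\rest j=\nu'\rest j=\nu''\rest j$ while $\eta_\ell(j)\le\eta_k(j)<\nu'(j),\nu''(j)$, where $\eta_\ell(j)$ is defined because $\eta_\ell\ordtwo\eta_k$ rules out $\eta_\ell\trianglelefteq\eta_k$; hence $\eta_\ell$ branches from both $\nu',\nu''$ at level $j$, again downward. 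In every case $\rlg(\eta_\ell\cap\nu')=\rlg(\eta_\ell\cap\nu'')=\min(i,j)$ and both order relations are true, which is exactly the displayed claim. I expect the genuine difficulty to be only this bookkeeping---keeping the inequality directions built into $\ordtwo$ straight and eliminating the prefix and degenerate configurations---rather than any deeper idea.
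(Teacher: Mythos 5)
Your proof is correct and follows essentially the same route as the paper's: part (1) by observing that a similarity class is determined by a finite packet of numerical data (lengths, meet-lengths, $\ordtwo$-pattern), and part (2) by disposing of the degenerate case $\nu'\triangleleft\eta_k$ and then locating the common branch point $j$ at which $\nu'$ and $\nu''$ both split upward from $\eta_k$. Your explicit case split comparing $\rlg(\eta_\ell\cap\eta_k)$ with $j$ in fact completes a step the paper leaves implicit (its proof stops once it has shown $\nu''(j)>\eta_k(j)$), so your write-up is, if anything, more detailed.
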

\begin{proof}
\begin{enumerate}
 \item Similarity is obviously an equivalence relation.
\\The equivalence class of a finite sequence of $\rm ds(\infty)$ is determined by its length $n$, the lengths $\seq{n_i:i<n} $ of its elements, the lengths $\seq{n_{i,j}:i,j<n}$ of their intersections, and a permutation of $n$ (the order of the elements according to $\ordone$). Therefore for each $n<\omega$ there are $\aleph_0$ equivalence classes of sequences of length $n$, and so the number of equivalence classes of finite sequences of $\rm ds(\infty)$ is $\aleph_0$.
\item We need to show that
 $\rlg(\nu'\cap\eta_i)=\rlg(\nu''\cap\eta_i)$ for every $0<i<k$.
 \\ $\eta_k\ordtwo\nu'$ and  $\eta_k\ordtwo\nu''$. If $\nu'\triangleleft\eta_k$ then we also have $\rlg (\nu'' \cap \eta_k)= \rlg (\nu'\cap\eta_k) = \rlg(\nu')
=\rlg(\nu'')$ so $\nu''\triangleleft\eta_k$, and $\nu'=\nu''$. In this case obviously the required sequences are similar, so we can assume that there is $\ell$ \st\
$\eta_k\rest\ell=\nu'\rest\ell$ and
$\nu'(\ell)>\eta_k(\ell)$. By the
same reasoning as above we deduce that $\eta_k\rest\ell=\nu''\rest\ell$ and
$\nu''(\ell)\neq\eta_k(\ell)$ so necessarily
$\nu''(\ell)>\eta_k(\ell)$.
\end{enumerate}
\end{proof}
The last term we will need before moving on to the main theorem
is that of uniformity.
\begin{definition}
\label{uniformity} Let $T\subseteq \rds(\infty)$ be a tree, $c:[T]^{<\aleph_0}\rightarrow C$. We identify $u\in[T]^{<\aleph_0}$ with the $\ordtwo$-\incr\ \sequ\ listing it.
\begin{enumerate}
\item We say $T$ is $c$-uniform if for any similar
$u_1,u_2$ in $[T]^{<\aleph_0}$ we have
$c(u_1)=c(u_2)$.
\item We say $T$ is $c$-end-uniform
(or end-uniform for $c$) when
\\ \uif\  $\eta_1 \ordtwo \eta_2
\ordtwo \ldots
\ordtwo \eta_k \ordtwo \rho',
\rho''$ are in $T$ and
$\rlg (\rho')= \rlg (\rho''), \rlg
(\eta_k \cap \rho')=\rlg (\eta_k\cap
\rho'')$
(equivalently $\langle \eta_1\ldots
\eta_k,\rho'\rangle,
\langle \eta_1\ldots \eta_k, \rho''\rangle$
are similar-see \ref{obs-ord}(3))
\\ \then\ $c(\seq{\eta_1\ldots \eta_k,\rho'})=
c(\seq{\eta_1,\ldots,\eta_k,\rho''})$.
\item We say $T$ is $c$-$n$-end-uniform
(or $n$-end-uniform for c) when for $k<\omega$, $\eta_i,\rho'_j,\rho''_j\in\rds(\infty)$ $(0<i\leq k,0<j\leq n)$ \st
$$
\eta_1 \ordtwo<\eta_2 \ordtwo
\ldots \ordtwo \eta_k
\ordtwo \rho'_1\ordtwo\ldots \ordtwo
\rho'_n
$$
$$
\eta_1 \ordtwo < \eta_2 \ordtwo\ldots
\ordtwo \eta_k \ordtwo
\rho''_1 \ordtwo <\ldots< \rho''_n
$$
\uif\ those two sequences are similar \then
$$
c(\seq{\eta_1\ldots,\rho'_1\ldots})=
c(\seq{\eta_1\ldots \rho''_1\ldots}).
$$
\end{enumerate}
\end{definition}

We are now ready for the main theorem of this paper.
\begin{mclaim}
\label{main claim}
Given a tree $S\subseteq \rds (\infty)$ and a cardinal
$\mu$ we can find a tree $T \subseteq  \rds (\infty)$
\st\
\begin{enumerate}
\item[$(*)_1$] for every $c:[T]^{<\aleph_0} \rightarrow \mu$ there
is $T'\subseteq T$ isomorphic to $S$ \st\ $c\rest T'$ is
$c$-end-uniform. \item[$(*)_2$] $|T| < \beth_{|S|^+} (|S|+\mu)$.
\end{enumerate}
\end{mclaim}
\begin{proof} We assume that $|S|,\mu$ are infinite cardinals  since one of our main goals is proving a statement of the form $x\rightarrow[y]^n_{\mu,\aleph_0}$, otherwise the bound on $T$ has to be slightly adjusted.
\\For each $\eta\in S$ let
$$
\begin{array}{l}\alpha_\eta=\alpha_S (\eta)={\rm otp} (\{\nu\in S :\nu\ordtwo\eta\},\ordtwo),
\\ \mu_\eta=\beth_{5 \alpha_\eta+1} (|S|+\mu), \\ \lambda_\eta=\beth_3(\mu_\eta)^+. \end{array}$$
Note that $\mu_{\seq{}}, \lambda_{\seq{}}$ are the maximal ones, and let  $\chi >>\lambda_{<>}$, and $<^*_\chi$ be a well ordering of $\mat H(\chi)$ (see \ref{logics-def}(5)). By definition, for every $\eta,\nu\in S$ \st\ $\eta\ordtwo\nu$ we have $\mu_\eta<\mu_\nu$, and $\lambda_\eta<\lambda_\nu$ in the following we examine the relation between $\mu_\nu$ and $\lambda_\eta$ for $\eta\neq\nu$.
\begin{obs}
\label{1.11}
For $\eta\ordtwo\nu$ we have $\mu_\nu\geq\lambda_\eta^+$.
\end{obs}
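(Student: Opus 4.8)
The plan is to reduce the whole statement to the monotonicity of the ordinals $\alpha_\eta$ along $\ordtwo$ together with routine $\beth$-arithmetic, so first I would isolate the one genuinely structural input. Since $\ordtwo$ well-orders $\rds(\infty)$ by Observation \ref{obs-ord}(1), its restriction to $S$ is a well-order, and whenever $\eta\ordtwo\nu$ with $\eta\neq\nu$ the set $\{\rho\in S:\rho\ordtwo\eta\}$ is a proper initial segment of $\{\rho\in S:\rho\ordtwo\nu\}$. Hence $\alpha_\eta={\rm otp}(\{\rho\in S:\rho\ordtwo\eta\})<\alpha_\nu$, so $\alpha_\eta+1\leq\alpha_\nu$. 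Using that the coefficient $5$ is finite (so ordinal multiplication behaves as expected on $\alpha_\eta+1$, giving $5\cdot(\alpha_\eta+1)+1=5\alpha_\eta+6$), this yields the level inequality
\[
5\alpha_\nu+1\ \geq\ 5(\alpha_\eta+1)+1\ =\ 5\alpha_\eta+6.
\]

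Next I would unwind $\lambda_\eta$ in terms of a single $\beth$-level. Writing $\kappa=|S|+\mu$, the composition identity $\beth_3(\beth_\gamma(\kappa))=\beth_{\gamma+3}(\kappa)$ gives $\beth_3(\mu_\eta)=\beth_3(\beth_{5\alpha_\eta+1}(\kappa))=\beth_{5\alpha_\eta+4}(\kappa)$, so setting $\theta:=\beth_{5\alpha_\eta+4}(\kappa)$ we have $\lambda_\eta=\theta^+$ and therefore $\lambda_\eta^+=\theta^{++}$. Two applications of Cantor's inequality (a successor cardinal is at most the power set of its predecessor) then climb exactly two further $\beth$-levels:
\[
\theta^+\leq 2^{\theta}=\beth_{5\alpha_\eta+5}(\kappa),\qquad
\theta^{++}\leq\bigl(\beth_{5\alpha_\eta+5}(\kappa)\bigr)^+\leq 2^{\beth_{5\alpha_\eta+5}(\kappa)}=\beth_{5\alpha_\eta+6}(\kappa).
\]

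Finally I would combine the two estimates: from the displayed level inequality and monotonicity of $\beth$,
\[
\mu_\nu=\beth_{5\alpha_\nu+1}(\kappa)\ \geq\ \beth_{5\alpha_\eta+6}(\kappa)\ \geq\ \theta^{++}=\lambda_\eta^+,
\]
which is precisely the claim. I expect no real obstacle here, since the content is bookkeeping; the only points demanding care are the ordinal-arithmetic step $5(\alpha_\eta+1)+1=5\alpha_\eta+6$ (one must not misapply distributivity on the wrong side) and correctly budgeting the $\beth$-steps—three consumed by the $\beth_3$ in the definition of $\lambda_\eta$ and two more by the double successor in $\lambda_\eta^+$, for a total of five. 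This total of five is exactly what forces the coefficient $5$ in the exponent $5\alpha_\eta+1$ of $\mu_\eta$, so the bound is tight and the choice of constants in the definitions is the thing being verified.
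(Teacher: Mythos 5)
Your proof is correct and follows essentially the same route as the paper's: both reduce the claim to $\alpha_\nu\geq\alpha_\eta+1$ and then use monotonicity of $\beth$ together with two applications of $\kappa^+\leq 2^\kappa$ to absorb the two successors in $\lambda_\eta^+=\beth_3(\mu_\eta)^{++}$. The paper just phrases the middle step relative to $\mu_\eta$ (as $\beth_{5(\alpha_\eta+1)+1}(|S|+\mu)=\beth_5(\mu_\eta)\geq\beth_3(\mu_\eta)^{++}$) rather than expanding everything into absolute $\beth$-levels over $|S|+\mu$ as you do.
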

\begin{proof} Since $\alpha_{\nu}\geq\alpha_\mu+1$ we have:
 \\$\begin{array}{lcl}
\mu_\nu & = &\beth_{5\alpha_\nu+1}(\size{S}+\mu)\\
\ & \geq & \beth_{5(\alpha_\eta+1)+1}(\size{S}+\mu)\\
\ & = &\beth_{5}(\mu_\eta)\\
\ & \geq & \beth_3(\mu_\eta)^{++}\\
\ & = & \lambda_\eta^+
\end{array}$ \\ 
\end{proof}
let $T:=\rds (\lambda_{\seq{}}^+)$, we will show that $T$ is as required. Obviously $T$ meets requirement $(*)_2$, and let $c:[T]^{<\aleph_0}\rightarrow \mu$. Because of the many details in the following construction we bring it as a separate lemma.
\begin{lemma}
\label{induction-prop}
 For $\eta\in S$ we can choose $M_\eta$, $T^*_\eta$ and $\nu_{\eta,n}\in T$ for $n<\omega$ with the following properties:
\begin{enumerate}
\item $M_\eta$ is an $\logic{\mu_\eta^+}$-elementary submodel of $\bool=({\mat H} (\chi),\in,<^*_\chi)$. 
 \item $\|M_\eta\|=2^{\mu_\eta}$.
 \item$S,T,c\in M_\eta$.
 \item $M_\rho,\nu_{\rho,n}\in M_\eta$ for  $\rho<_{\ell x}^* \eta$, $n<\omega$.
\item Properties of $T^*_\eta$:
 \begin{enumerate}
 \item $ T^*_\eta=\nu_{\eta,\rlg(\eta)}\conc T'$ where $T'$ is isomorphic to $\rds(2^{2^{\mu_\eta}})$.
 \item If $\nu',\nu''\in T^*_\eta$ and are of the same length then they realize the same $\logic{\mu_\eta^+}$-type over $M_\eta$.
     \end{enumerate}
\item Properties of the  $\nu_{\eta,n}$:
\begin{enumerate}
\item $\nu_{\eta,n}\in T$ is of length $n$.
\item $\nu_{\eta,\rlg (\eta)}\in M_\eta$.
\item $\rlg(\eta)= m<n\Rightarrow\nu_{\eta,n}(m)\notin M_\eta$.
\item $\nu_{\eta,n}\in T^*_\eta$, and for $n\geq\rlg(\eta)$ has at least $\mu_\eta$ immediate successors in $T^*_\eta$.
    \end{enumerate}
\item If $\eta=\eta_1 \conc \seq{\alpha}$, \then\
\begin{enumerate}
\item $M_\eta,T_\eta^*,\nu_{\eta,n} \in M_{\eta_1}$ for $n<\omega$.
\item $\nu_{\eta_1,n}, \nu_{\eta,n}$ realize the same ${\bb L}_{\mu^+_\eta,\mu^+_\eta}$-type over
    $\{M_\rho,\nu_{\rho,n}:n<\omega,\rho <^*_{\ell x} \eta\}$.
\item $\nu_{\eta_1,n}=\nu_{\eta,n}$ for $n\leq\rlg\ (\eta_1)$.
 \item $\nu_{\eta,n} \order\nu_{\eta_1,n}$ for $n=\rlg (\eta)$.
 \item $\nu_{\eta,\rlg(\eta)}=\nu_{\eta,\rlg\eta_1}\conc\langle\gamma\rangle$ for some $\gamma$.
     \item If $\eta'=\eta_1\conc\seq{\alpha'}$  with $\alpha'<\alpha$ then $\nu_{\eta',\rlg(\eta')}\order\nu_{\eta,\rlg(\eta)}$.
\end{enumerate}
\end{enumerate}
\end{lemma}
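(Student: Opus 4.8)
The plan is to construct the triples $M_\eta$, $T^*_\eta$, $\seq{\nu_{\eta,n}:n<\omega}$ by transfinite recursion along the well-ordering $\ordtwo$ of $S$, that is, by recursion on $\alpha_\eta$. The key combinatorial fact organizing the recursion is that for a node $\eta_1\in S$ every immediate tree-successor $\eta=\eta_1\conc\seq{\alpha'}$ satisfies $\eta_1\triangleleft\eta$ and hence $\eta\ordtwo\eta_1$, and more generally every proper descendant of $\eta_1$ in $S$ is $\ordtwo$-below $\eta_1$; so this is in effect a post-order traversal in which an entire subtree is completed before its root is processed. Thus when I arrive at $\eta_1$, the data attached to all of its descendants — in particular to all its successors $\eta_1\conc\seq{\alpha'}$ — is already fixed, which is exactly what condition $(7)$ needs, since $(7)$ is a constraint binding a node to its $\triangleleft$-predecessor and is therefore discharged at the later predecessor stage. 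I treat $(1)$--$(6)$ as conditions local to the current node and verify $(7)$ when processing $\eta_1$.

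For the submodel I apply Remark \ref{logics-use}(4) with $\kappa=\mu_\eta^+$ and $\lambda=2^{\mu_\eta}$: since $\mu_\eta^+$ is regular and $(2^{\mu_\eta})^{<\mu_\eta^+}=2^{\mu_\eta}$, there is a $\logic{\mu_\eta^+}$-elementary submodel of $\bool$ of cardinality $2^{\mu_\eta}$ containing any prescribed set of size $\le 2^{\mu_\eta}$. Into that set I put $S,T,c$, all $M_\rho,\nu_{\rho,n}$ with $\rho\order\eta$ (for $(4)$), and, so that $(7a)$ holds at this stage, the packages $M_{\eta'},T^*_{\eta'},\nu_{\eta',n}$ of the tree-successors $\eta'=\eta\conc\seq{\alpha'}\in S$ processed earlier. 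There are at most $|\alpha_\eta|\le\mu_\eta$ relevant $\rho$ and at most $|S|\le\mu_\eta$ successors, each a single element, so the prescribed set has size $\le 2^{\mu_\eta}$, and Observation \ref{1.11} guarantees these earlier objects are genuinely available. This yields $(1)$--$(4)$ and $(7a)$.

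The heart of the argument is the construction of $T^*_\eta$ and the branch, and it is where I invoke Theorem \ref{796}. I colour the nodes of $T$ (above a chosen length-$\rlg\eta$ base) by the $\logic{\mu_\eta^+}$-type realized over $M_\eta$; since $|\logic{\mu_\eta^+}|=2^{\mu_\eta}$ and $\|M_\eta\|=2^{\mu_\eta}$, this uses at most $2^{2^{\mu_\eta}}$ colours. Taking target rank and colour-number both equal to $2^{2^{\mu_\eta}}$, the hypothesis of Theorem \ref{796} asks for a source of rank $\beth_3(2^{2^{\mu_\eta}}/\ldots)^+$, which works out to exactly $\lambda_\eta=\beth_3(\mu_\eta)^+$ — this is precisely why $\lambda_\eta$ was defined so — and it is available since $\rrk_T(T)=\lambda_{\seq{}}^+\ge\lambda_\eta^+$. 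Theorem \ref{796} then returns an embedded copy of $\rds(2^{2^{\mu_\eta}})$ on which the colour depends only on the level, i.e. the level-wise type-homogeneity $(5b)$; setting $T^*_\eta=\nu_{\eta,\rlg\eta}\conc T'$ with $T'$ this copy gives $(5a)$. The base $\nu_{\eta,\rlg\eta}$ is chosen in $M_\eta$ by elementarity (so $(6b)$), while the coordinates past level $\rlg\eta$ are pushed outside $M_\eta$ — possible since $M_\eta$ omits cofinally many ordinals below $\lambda_{\seq{}}^+$ — giving $(6c)$; reading the $\nu_{\eta,n}$ off as a branch and noting each positive-rank node of $\rds(2^{2^{\mu_\eta}})$ has $2^{2^{\mu_\eta}}\ge\mu_\eta$ successors yields $(6a),(6d)$. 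The five $\beth$-steps per level built into $\mu_\eta=\beth_{5\alpha_\eta+1}(|S|+\mu)$, with the gaps guaranteed by Observation \ref{1.11}, are exactly what keep enough rank in reserve for this application at every stage.

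Finally, for the coherence $(7b)$--$(7f)$: when processing $\eta_1$, all successor branches $\nu_{\eta',n}$ lie in $M_{\eta_1}$, so using the type-homogeneity of $T^*_{\eta_1}$ together with $(4)$ I choose $\seq{\nu_{\eta_1,n}}$ inside $T^*_{\eta_1}$ to extend the common stem $\nu_{\eta_1,\rlg\eta_1}$ $(7c)$, to be a type-twin of each successor branch over $\set{M_\rho,\nu_{\rho,n}:\rho\order\eta_1}$ $(7b)$, to take a one-step extension at level $\rlg\eta_1+1$ $(7e)$, and to lie $\order$-above all successor nodes, these being $\order$-increasing $(7d),(7f)$; the ordering is arrangeable because the stem has $\ge\mu_{\eta_1}$ immediate successors and Observation \ref{obs-embedding} reduces the embedding check to comparing immediate successors. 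The main obstacle, and the genuinely delicate point, is that $(7b)$--$(7f)$ are rigid joint constraints tying a node to all of its tree-successors at once: those successors had to be built at their earlier $\ordtwo$-stages already sharing one stem and already $\order$-ordered and type-coherent, before the predecessor's stem node had even been named. Reconciling this foreknowledge with the independent level-wise homogenization of each $T^*_\eta$ and with the genericity $(6c)$, all inside the fixed $\beth$-budget, is the crux; it is made to work by organizing the post-order recursion so that the siblings below $\eta_1$ are extracted from a single type-homogeneous frame that $M_{\eta_1}$ can reconstruct, and then matching types through the elementarity of $M_{\eta_1}$ and Theorem \ref{796}.
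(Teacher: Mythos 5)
You have inverted the direction of the recursion, and that is exactly where the proposal breaks. The paper's construction is by induction on $\ordone$ (pre-order: a node is processed \emph{before} its tree-successors; the proof even stresses ``yes, $\ordone$ not $\ordtwo$''), whereas you recurse on $\ordtwo$, i.e. post-order, children before parent. In the paper's direction, when $\eta=\eta_1\conc\seq{\alpha}$ is reached, the parent's data $M_{\eta_1},T^*_{\eta_1},\seq{\nu_{\eta_1,n}:n<\omega}$ already exists; one forms the tree $T_\varphi$ of all $\rho\in T$ realizing over $A_\eta$ the same $\logic{\mu^+_\eta}$-type as $\nu_{\eta_1,\rlg(\rho)}$, proves the crucial rank claim ($\circledast_7$: $\rrk_{T_\varphi,\mu_{\eta_1}}(\nu_{\eta_1,n})>\mu_{\eta_1}$, by deriving an infinite decreasing sequence of ordinals, where the witness for large rank is the parent's level-homogeneous tree $T^*_{\eta_1}\subseteq T_\varphi$), and then carves the child's stem, branch and $T^*_\eta$ \emph{inside} $T_\varphi$; this is what makes $(7)(b)$ automatic. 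Your recursion discharges only $(7)(a)$ cheaply and pushes $(7)(b)$--$(7)(f)$ to the parent stage, and there your text offers no construction, only the assertion that the siblings ``are extracted from a single type-homogeneous frame that $M_{\eta_1}$ can reconstruct.'' No such frame exists in your setup, and it cannot be produced at the parent stage: you would need a branch $\seq{\nu_{\eta_1,n}}$ and a tree $T^*_{\eta_1}$ isomorphic to $\rds(2^{2^{\mu_{\eta_1}}})$, level-homogeneous over the \emph{large} model $M_{\eta_1}$, whose level-$n$ nodes in addition realize, over each $A_\eta$, the already-fixed $\logic{\mu_\eta^+}$-type of the child node $\nu_{\eta,n}$, simultaneously for all children $\eta$. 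Theorem \ref{796} applied to the raw tree $T$ (your only use of it) yields homogeneity over $M_{\eta_1}$ but gives no control over \emph{which} level types the homogeneous copy realizes, so there is no mechanism to match the children's pre-existing types; and the only candidate witness for a rank argument inside the intersection of the children's type-realization trees would be a large coherent reservoir, which in post-order does not exist: each child's own $T^*_\eta\cong\rds(2^{2^{\mu_\eta}})$ has rank $2^{2^{\mu_\eta}}<2^{2^{\mu_{\eta_1}}}$ (since $\mu_\eta<\mu_{\eta_1}$) and so is strictly too small to serve.

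The cardinal structure of the lemma makes this irreversibility essential rather than a matter of bookkeeping: the data shrinks going down the tree, and by $(7)(a)$ the child's entire package must be an \emph{element} of the parent's model, so small objects are carved definably inside large ones, never the reverse. Worse, in your order the very first node processed (the $\ordtwo$-least element of $S$) already fixes, via $(7)(c)$ and $(7)(e)$, the stem nodes $\nu_{\rho,n}$, $n\le\rlg(\rho)$, of \emph{all} of its ancestors, before any of the models, realization-trees or rank witnesses that would certify those choices exist; in the paper these stems are chosen ($\circledast_9$) to be $\ordone$-minimal subject to having rank $\ge\lambda_\eta^+$ in $T_\varphi$, a condition you cannot even formulate at your first stage. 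Your toolkit (Remark \ref{logics-use}(4), colouring by types, Theorem \ref{796}, the $\beth$-gaps of Observation \ref{1.11}) is the right one and your handling of the purely local clauses $(1)$--$(6)$ is essentially sound, but the crux you yourself flag in the last paragraph is precisely the content of the paper's $\circledast_3$--$\circledast_9$, and that argument runs only in the $\ordone$ direction.
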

\begin{proof}
We show a construction for such a choice  by induction on $\ordone$, yes, $\ordone$ not $\ordtwo$.
\\ As the induction is on $\ordone$ the base of the induction is the case $\eta=\seq{}$.  First choose $M_{\seq{}} \prec_{\logic{\mu^+_{\seq{}}}}\bool$ of cardinality $2^{\mu_{\seq{}}}$, so that $S,T,c\in M_{\seq{}}$ (this can be done, see Remark \ref{logics-use}). The number of  $\bb L_{\mu_{\seq{}}^+,\mu_{\seq{}}^+}$ formulas $\varphi(\bar x,\bar a)$ where $\bar a\subseteq{}^{\mu_{\seq{}}^+>}M_{\seq{}}$ (sequences of length $< \mu_{\seq{}}^+$ in $M_{\seq{}}$) is $\leq (2^{\mu_{\seq{}}})^{\mu_{\seq{}}}=2^{\mu_{\seq{}}}$ hence the number of $\logic{\mu^+_{\seq{}}}$-types over $M_{\seq{}}$ is at most $\mu'=2^{2^{\mu_{\seq{}}}}$, so we colour $T=\rds (\lambda^+_{\seq{}})$ by $\leq \mu'$ colours, $c_{\seq{}}:T\to\mu'$, so that for $\rho\in T$ its colour, $c_{\seq{}}(\rho)$, codes the $\logic{\mu^+_{\seq{}}}$-type which $\rho$ realizes in $\bool$ over $M_{\seq{}}$. As  $$((\beth_2(\mu_{\seq{}}))^{\mu'^{\aleph_0}})^+=\beth_3(\mu_{{\seq{}}})^+=\lambda_{{\seq{}}}$$  by Theorem \ref{796} there is an embedding of $\rds(\beth_2(\mu_{\seq{}}))$ in $T$, and define $T^*_{\seq{}}$ to be its image, so that types of \sequ s from $T^*_{\seq{}}$ depend only on their length. We choose representatives
$\seq{ \nu_{{\seq{}},n}:0<n<\omega}$  from each level larger than $0$ so that for $n>0$ $\nu_{{\seq{}},n}$ and has at least $\mu_{\seq{}}$ immediate successors in $T^*_{\seq{}}$ and satisfies $6(c)$. The latter  can be done by cardinality considerations, $\|M_{\seq{}}\|=2^{\mu_{\seq{}}},$ while the cardinality of levels in $T^*_{\eta_{\seq{}}}$ is $\beth_2(\mu_{\seq{}})$.
We let $\nu_{\seq{},0}=\seq{}$.
\\It is easily verified that for $\eta=\seq{}$ all the requirements of the construction are met.
\\We now show the induction step.  \\Assume $\eta=\eta_1\conc\seq{\alpha_1}$, $\rlg(\eta_1)=r$, and that we have defined for
$\eta_1$ (and below by $\ordone$) and we define for $\eta$.
\begin{enumerate}
  \item[$\circledast_1$] Let $A_\eta=\{M_\rho,\nu_{\rho,n}: n<\omega,\rho <^*_{\ell x} \eta\}.$
\end{enumerate}
For any $\rho<^*_{\ell x}\eta$ if $\rho=\eta_1\conc\langle\alpha\rangle$ for some $\alpha<\alpha_1$ then from requirement $(7)(a)$ of the construction for $\rho$ we have $M_\rho\in M_{\eta_1}$, and also for all $n<\omega$ $\nu_{\rho,n}\in M_{\eta_1}$, else $\rho\order\eta_1$ therefore from requirement $(4)$ of the construction for $\eta_1$ we have for all $n<\omega$ $\nu_{\rho,n}\in M_{\eta_1}$, and $M_\rho\in M_{\eta_1}$. So $A_\eta\subseteq M_{\eta_1}$, and $\size{A_\eta}\leq\mu_{\eta_1}$, so $A_\eta$ is definable by an $\logic{\mu_{\eta_1}^+}$-formula with parameters in $M_{\eta_1}$, so we have:
\begin{enumerate}
    \item[$\circledast_2$]  $A_\eta \subseteq M_{\eta_1}, |A_\eta|\leq\mu_\eta\leq \mu_{\eta_1}$, therefore $A_\eta\in M_{\eta_1}$.
\end{enumerate}
For every $n<\omega$ let
\begin{enumerate}
    \item[$\circledast_3$] $\varphi_{n}(x)=\varphi_{\mu_{\eta_1},n}(x)= \bigwedge (\hbox{ the } \logic{\mu^+_{\eta}}-\hbox{ type which } \nu_{\eta_1,n} \hbox{ realizes over } A_\eta)$
\end{enumerate} And  let
\begin{enumerate}
    \item[$\circledast_4$]$T_{\varphi}=\{\rho\in T: \bool\models \varphi_{\rlg(\rho)} (\rho)\}.$
\end{enumerate}
As the cardinality of the $\logic{\mu^+_\eta}$-type of any $\nu\in\bool$ over $A_\eta$ is at most $2^{\mu_\eta}$ which is less than $\mu_{\eta_1}$, for every $n<\omega$ we have that $\varphi_n$ is an $\logic{\mu^+_{\eta_1}}$-formula  and therefore $T_{\varphi}$ is definable in $M_{\mu_{\eta_1}}$ by an $\logic{\mu^+_{\eta_1}}$-formula, namely  $$
    \rho\in T_{\varphi}\leftrightarrow\Big(\rho\in T\wedge \big(\bigvee\limits_{n<\omega}({\rm lg}(\rho)=n\wedge\varphi_n(\rho))\big)\Big)$$
So \begin{enumerate}
     \item[$\circledast_5$]$T_{\varphi} \in M_{\eta_1}$ and for every $n<\omega$ we obviously  have $\nu_{\eta_1,n}\in T_{\varphi}$.
\end{enumerate}
Recall that for all $n<\omega$ $\nu_{\eta_1,n}\in T^*_{\eta_1}$,
 so for any $\rho\in T^*_{\eta_1}$ of length $n$, we have that $\rho$
realizes the same $\logic{\mu_{\eta_1}^+}$-type over $M_{\eta_1}$
as $\nu_{\eta_1,n}$ so in particular they realize the same
$\logic{\mu_\eta^+}$-type over $A_\eta$, so $\rho\in T_\varphi$.
For $m\geq n$ $\nu_{\eta_1,n},\nu_{\eta_1,m}\rest n$ are of the
same length, so in particular $\varphi_m(x)\vdash\varphi_n(x\rest
n)$. If $\rho\in T_{\varphi}$, $\rlg \rho=m $  so
$\bool\models\varphi_{m}(\rho)$ therefore
$\bool\models\varphi_n(\rho\rest n)$ and therefore also $\rho\rest
n\in T_{\varphi}$. We summarize:
 \begin{enumerate}
   \item[$\circledast_6$]  $T_{\varphi}$  is a
    subtree of \ $T$ and $T^*_{\eta_1}\subseteq T_\varphi$.
 \end{enumerate}
The following point is a crucial one, we show that:
   \begin{enumerate}
   \item[$\circledast_7$]  ${\rm rk}_{T_{\varphi},\mu_{\eta_1}}
    (\nu_{\eta_1,n}) > \mu_{\eta_1}$ for every $n$ \st\  $\rlg(\eta_1)\leq n<\omega$ .
    \end{enumerate}
Assume toward contradiction that ${\rrk}_ {T_{\varphi},\mu_{\eta_1}}
(\nu_{\eta,m}) \leq \mu_{\eta_1}$ for some $\rlg(\eta_1)\leq
m<\omega$, and define for each $n$ \st\ $m\leq n<\omega:$ $$\gamma_n=
\rrk_{T_{\varphi},\mu_{\eta_1}}(\nu_{\eta,n})\hbox{ and }
\gamma^*_n=\rrk^{\mu_{\eta_1}}_{T_\varphi,\mu_{\eta_1}}(\nu_{\eta_1,n})$$
(see Definitions \ref{rank} and \ref{reduced-rank}). We now prove by
induction on $n\geq m$ that $\gamma_{n+1}\leq\mu_{\eta_1}$, i.e
$\gamma_n=\gamma^*_n$. For $n=m$ this is our assumption, and assume
that it is known for $n$.
   The following  can be expressed by $\logic{\mu_{\eta_1}^+}$-formulas with parameters in $M_{\eta_1}:$
   \begin{enumerate}
   \item[$\psi_1:$]`$x$ has $\rrk_{T_\varphi,\mu_{\eta_1}}^{\mu_{\eta_1}}(x)=\gamma_n$'
   \item[$\psi_2:$] `$x$ has at least $\mu_{\eta_1}$ immediate successors $y$ in $T_\varphi$ with $\rrk_{T_{\varphi},\mu_{\eta_1}}^{\mu_{\eta_1}}(y)\geq \gamma^*_{n+1}$'
   \end{enumerate}
We have  $\bool\models\psi_1(\nu_{\eta_1,n})$, and since   $T_{\eta_1}^*\subset T_\varphi$ (see $\circledast_6$) we also have $\bool\models\psi_2(\nu_{\eta_1,n})$. By the induction hypothesis for $\eta_1$ we have $\nu_{\eta_1,n},\nu_{\eta_1,n+1}\rest n\in T^*_{\eta_1}$ and as they are the same length realize the same $\logic{\mu_{\eta_1}^+}$-type over $M_{\eta_1}$, so $\bool\models\psi_1\wedge\psi_2(\nu_{\eta_1,n+1}\rest n)$, or in more detail, we  have  that $\rrk_{T_\varphi,\mu_{\eta_1}}^{\mu_{\eta_1}}(\nu_{\eta_1,n+1}\rest n)=\gamma_n$, i.e  $\rrk_{T_\varphi,\mu_{\eta_1}}(\nu_{\eta_1,n+1}\rest n)=\gamma_n$, and  $\nu_{\eta_1,n+1}\rest n$ has at least $\mu_{\eta_1}$ immediate successors in $T_\varphi$ with reduced rank $\gamma^*_{n+1}$, so by the definition of rank (Definition \ref{rank}) we have $\gamma_n>\gamma^*_{n+1}$. By the induction hypothesis $\gamma_n\leq\mu_{\eta_1}$, therefore also  $\gamma_{n+1}^*=\gamma_{n+1}$. In particular we can deduce that  $\gamma_{n+1}<\gamma_n$, so having carried out the induction we have an infinite decreasing sequence of ordinals which is a contradiction.
\\Recall that $\rlg(\eta_1)=r$ so $\rlg(\eta)=r+1$,
\begin{enumerate}
\item[$\circledast_{8}$] Define $\nu_{\eta,\ell}=\nu_{\eta_1,\ell}$ for $\ell\leq r$.
\end{enumerate}
 By \ref{1.11}  $\mu_{\eta_1}\geq\lambda_\eta^+$,
 by $\circledast_{7}$ $\rrk_{T_{\varphi},\mu_{\eta_1}}(\nu_{\eta_1,r})>\mu_{\eta_1}$ therefore $\rrk_{T_{\varphi},\mu_{\eta_1}}(\nu_{\eta_1,r})>\lambda_\eta^+$ so by definition
 there are $\nu\in {\rm Suc}_T (\nu_{\eta_1,r}) \cap
 T_{\varphi}$ satisfying ${\rrk}_{T_{\varphi},\mu_{\eta_1}}
    (\nu)\geq \lambda_\eta^+$, defining $\nu_{\eta,r+1}$ to be one such $\nu$  which is minimal with respect to $\ordone$ (this is equivalent to demanding that $\nu(r)$ is minimal) can be done by an $\logic{\mu_{\eta_1}^+}$ formula. We therefore conclude:
    \begin{enumerate}
\item[$\circledast_{9}$] We can  choose $\nu_{\eta,r+1} \in {\rm Suc}_T (\nu_{\eta_1,r}) \cap T_{\varphi}\cap M_{\eta_1}$ \st\
    \begin{enumerate}
    \item[$(i)$]
    ${\rm rk}_{T_{\varphi},\mu_{\eta_1}}
    (\nu_{\eta,r+1}) \geq \lambda_\eta^+$.
    \item[$(ii)$]$\nu_{\eta,r+1}$ is minimal under $(i)$  in $\ordone$.
    \end{enumerate}
\end{enumerate}
As $\nu_{\eta,\rlg(\eta)}\in M_{\eta_1} and $ $\nu_{\eta_{1},\rlg(\eta)}(\rlg(\eta_1))\notin M_{\eta_1}$, we have:
\begin{enumerate}
\item[$\circledast_{10}$]$\nu_{\eta,\rlg(\eta)}\ordone \nu_{\eta_1,\rlg(\eta)}$,  notice that as they are the same length $\ordone\Rightarrow\order$.
\end{enumerate}
Now for any $\rho=\eta_1\conc\seq{\alpha}\in S$ where $\alpha<\alpha_1$ we have that $\rho\order\eta$ and therefore $\nu_{\rho,r+1}\in A_{\eta}$ (see $\circledast_1$). $\nu_{\eta,\rlg(\eta)},\nu_{\eta_1,\rlg(\eta)}$ realize the same $\logic{\mu_\eta^+}$-type over $A_\eta$, and by requirement $(7)(d)$ of the construction for $\rho$ ($\rlg(\rho)=\rlg(\eta)$) we have $\nu_{\rho,\rlg(\eta)}\ordone\nu_{\eta_1,\rlg(\eta)}$ so also  $\nu_{\rho,\rlg(\eta)}\ordone\nu_{\eta,\rlg(\eta)}$ and as above, as they are the same length $\ordone\Rightarrow\order$, and we therefore conclude that:
\begin{enumerate}
    \item[$\circledast_{11}$] If $\rho=\eta_1\conc\seq{\alpha}\in S$ where $\alpha<\alpha_1$
    then $\nu_{\rho,\rlg(\eta)}\order\nu_{\eta,\rlg(\eta)}$.
\end{enumerate}
Since $\size{\set{S,t,c,\nu_{\eta_\rlg(\eta)}}\cup A_\eta}<2^{\mu_\eta}$ by Remark \ref{logics-use} we can   choose $M_\eta$ so that
\begin{enumerate}
\item[$\circledast_{12}$] $M_\eta\prec_{\logic{\mu_\eta^+}}M_{\eta_1}$, and therefore also $M_\eta\prec_{\logic{\mu_\eta^+}}\bool$, of cardinality $2^{\mu_\eta}$ and $\set{S,t,c,\nu_{\eta_\rlg(\eta)}}\cup A_\eta\subseteq M_\eta$.
\end{enumerate}
By the same remark we can conclude that
\begin{enumerate}
\item[$\circledast_{13}$] $M_\eta\in M_{\eta_1}$.
\end{enumerate}
Lastly we choose $T^*_\eta$ and $\nu_{\eta,m}$ for $m>\rlg (\eta)$.
\\We have already commented that $\rrk_{T_\varphi,\mu_{\eta_1}}(\nu_{\eta,\rlg(\eta)})>\lambda_\eta^+$, so from Observation \ref{obs-rank} we can embed $\nu_{\eta,\rlg(\eta)}\conc\rds(\lambda_\eta^+)$ into $T_{\varphi}$ so that $\rho\mapsto\rho$ for $\rho\trianglelefteq\nu_{\eta,\rlg(\eta)}$, and denote one such embedding by $\psi$, \wolog\ $\psi\in M_{\eta_1}$.
\\The number of $\bb L_{\mu_\eta^+,\mu_\eta^+}$-types over $M_\eta$ is at most $\mu'=2^{2^{\mu_\eta}}$. We colour $\rds(\lambda_\eta^+)$ in $\leq\mu'$ colours, the colour of $\rho\in\rds(\lambda_\eta^+)$ is determined by the $\bb L_{\mu_\eta^+,\mu_\eta^+}$-type which $\psi(\nu_{\eta,\rlg(\eta)}\conc\rho)$ realizes over $M_\eta$, call this colouring $c_\eta$.
As $((\beth_2(\mu_\eta))^{\mu'^{\aleph_0}})^+=\beth_3(\mu_\eta)^+=\lambda_\eta$, we can use \ref{796} to get an embedding $\theta$ of $\rds(\beth_2(\mu_\eta))$ into $\rds(\lambda_\eta^+)$ so that for $\rho\in\rds(\beth_2(\mu_\eta))$ the $\logic{\mu_\eta^+}$-type that $\nu_{\eta,n+1}\conc\theta(\rho)$ realizes over $M_\eta$ depends only on its length. Since the set $X$ of $\logic{\mu_\eta^+}$-types over $M_\eta$ is in $M_{\eta_1}$ of cardinality at most $\mu'<\mu_{\eta_1}$ we have $X\subset M_{\eta_1}$,  also $\rds(\lambda_\eta^+)\in M_{\eta_1}$ so $c_\eta\in M_{\eta_1}$ and therefore \wolog\ $\theta\in M_{\eta_1}$.   We define
\begin{enumerate}
\item[$\circledast_{14}$] $T^*_{\eta}=\nu_{\eta,\rlg(\eta)}\conc\theta\big(\rds(\beth_2(\mu_\eta))\big)$.
\end{enumerate}
$T^*_\eta\in M_{\eta_1}$ and  meets requirement $(5)$ of the construction.
We will now choose representatives $\seq{\rho_m:0<m<\omega}$ from each level of $\rds(\beth_2(\mu_\eta))$ so that $\nu_{\eta,n+1}\conc\theta(\rho_m)$ has at least ${\mu_{\eta}}$ immediate successors in $T^*_\eta$ and $\nu_{\eta,n+1}\conc\theta(\rho_m)(\rlg(\eta))\notin M_{\eta_1}$, since the existence of such representatives in $\bool$ can be expressed by an $\logic{\mu_{\eta_1}^+}$-formula with parameters in $M_{\eta_1}$  so \wolog\ $\rho_m\in M_{\eta_1}$ and define
\begin{enumerate}
\item[$\circledast_{15}$] $\nu_{\eta, \rlg(\eta)+m}=\nu_{\eta,n+1}\conc\theta(\rho_m)$.
\end{enumerate}
$T^*_\eta$ is a subtree of $T_\varphi$ therefore $\rho\in T^*_\eta$ realizes the same $\logic{\mu_\eta^+}$type over $A_\eta$ as $\nu_{\eta_1,\rlg(\rho)}$.
The $\nu_{\eta,n}$ for $n>\rlg(\eta)$ were chosen to satisfy $(6)(c)$-$(d)$ so in particular they are in $T_\varphi$, and therefore realize the same $\logic{\mu_\eta^+}$-type over $A_\eta$ as $\nu_{\eta_1,n}$. By the induction hypothesis we have already constructed for $\eta_1$ so  for all $n$ we have  $\rlg(\nu_{\eta,n})=\rlg(\nu_{\eta_1,n})=n$ so also $(6)(a)$ is satisfied.
Requirements $(1)$-$(4)$ and $(6)(b)$ of the construction  are taken care of by $\circledast_{12}$.
$\circledast_7$-$\circledast_{11}$, $\circledast_{13}$ and $\circledast_{15}$ guarantee requirement $(7)$.
\end{proof}
\par\noindent
\\All that is left in order to complete the proof of  the claim is to show that
 $\{\nu_{\eta,\rlg (\eta)}:\eta\in S\}$ is end-uniform with respect to $c$.
\\Let $\eta_1\ordtwo \eta_2\ordtwo \ldots \ordtwo\eta_k\ordtwo\rho', \rho''$,
be as in \ref{uniformity}(2); \wolog\ $\rho' \order\rho''$. Let
$t=\rlg (\rho'\cap \rho'')$, $\mu'=\mu^+_{\rho'}$ and
$A=\set{\nu_{\rho,\rlg \rho}: \rho \order \rho'\rest (t+1)}$.
\\We first show that for every $i\leq k$ $\eta_i\order\rho'\rest(t+1)$
so that $\nu_{\eta_i.\rm lg(\eta_i)}\in A$. As $\eta_i\ordtwo\rho'$
and $\rlg(\eta_i\cap\rho'')=\rlg(\eta_i\cap\rho')$ so
$\rho'\ntriangleleft\eta_i$, therefore there is $\ell_i$ \st\
$\eta_i\rest\ell_i=\rho'\rest\ell_i$ and
$\eta_i(\ell_i)<\rho'(\ell_i)$, but then
$\eta_i\rest\ell_i=\rho''\rest\ell_i$ i.e
$\rho'\rest\ell_i=\rho''\rest\ell_i$ so $\ell_i\leq t$ (and
$\eta_i(\ell_i)<\rho''(\ell_i)$) and $\eta_i\order\rho'\rest(t+1)$.
\\We now prove by induction on $\ell\in [t,\rlg (\rho')]$ that $\nu_{\rho'\rest\ell,\rlg \rho'}$
and $\nu_{\rho'\rest t,\rlg \rho'}$ realize the same ${\bb
L}_{\mu',\mu'}$-type over $A$. For $\ell=t$ this is obvious. Let us
assume correctness  for $\ell$ and prove for $\ell + 1$. For every
$n<\omega$ by $(7)(b)$ of the construction
$\nu_{\rho'\rest\ell,n},\nu_{\rho'\rest(\ell+1),n}$ realize the same
$\logic{\mu^+_{\rho'\rest(\ell+1)}}$-type over
$\set{M_\rho,\nu_{\rho,n}:\rho\order\rho'\rest(\ell+1)}$ and in
particular over $A$, for if $\rho\order\rho'\rest(t+1)$ then also
$\rho\order\rho'\rest(\ell+1)$. So
$\nu_{\rho'\rest\ell,\rlg\rho'},\nu_{\rho'\rest(\ell+1),\rlg\rho}$
realize the same ${\bb L}_{\mu^+_{\rho'\rest(\ell+1)},
\mu^+_{\rho'\rest(\ell+1)}}$-type so also the same ${\bb
L}_{\mu',\mu'}$-type over $A$, and from the induction hypothesis
$\nu_{\rho'\rest t,\rlg\rho'}$ and $\nu_{\rho'\rest\ell, \rlg\rho'}$
realize the same $\logic{\mu'}$-type over $A$.
    Similarly we show for $\rho''$, so $\nu_{\rho',\rlg \rho'}$ and $\nu_{\rho'',\rlg \rho''}$ realize the same ${\bb L}_{\mu^+_{\eta_1},\mu^+_{\eta_1}}$-type over $A$.
    \newline From the above we can deduce that in particular  $$c(\seq{\nu_{\eta_1,\rlg(\eta_1)},\dots,\nu_{\eta_k,\rlg(\eta_k)},\nu_{\rho',\rlg(\rho')}})=
    c(\seq{\nu_{\eta_1,\rlg(\eta_1)},\dots,\nu_{\eta_k,\rlg(\eta_k)},\nu_{\rho'',\rlg(\rho'')}}).$$
\end{proof}

\begin{conclusion}
\label{conclusion}
 Given a tree $S \subseteq \rds (\infty)$
and $n(*)<\omega$ and $\mu$ we can find a tree
$T\subseteq \rds (\infty)$ \st:
\begin{enumerate}
\item [$(\ast)_1$] For every $c:[T]^{<\aleph_0}
\rightarrow \mu$ there is $S' \subseteq T$
isomorphic to $S$ \st\ $S'$ is $n(*)$-end-uniform for
$c$.
\item [$(\ast)_2$]In particular, for every
    $c:[T]^{n(*)}\rightarrow\mu$ is $S'\subseteq T$ isomorphic to
    $S$ \st\ $c\rest S'$ depends only on the equivalence classes
    of the equivalence relation defined in \ref{similarity}.
\item [$(\ast)_3$]$|T|<\beth_{1,n(*)}(|S|,\mu)$ (see Definition \ref{beths} below).
\end{enumerate}
\end{conclusion}
\begin{proof} Let $S$, $\mu$ be as above. Since for $\size{S}, \mu\geq\aleph_0$ we have that  $\beth_{1,n(*)}(|S|,\mu^{\aleph_0})= \beth_{1,n(*)}(|S|,\mu)$, replacing $\mu$ with $\mu^{\aleph_0}$ gives the same bound, and we can therefore assume that $\mu=\mu^{\aleph_0}$.
\\Let $\seq{h_n:n<\omega}$ be the equivalence classes of the similarity relationship on finite sequences of $\rm ds(\infty)$
(see \ref{obs-sim}(1)), and let
$f:{}^\omega(\mu\cup\{-1\})\rightarrow\mu$ be one-to-one and onto.
\\We construct by induction a \sequ\ $\seq{T_n:n<\omega}$ so that $T_0=S$, and for every $n>0$:
\begin{enumerate}
\item[$(a)$]  $|T_n|<\beth_{1,n}(|S|,\mu)$
\item[$(b)$] $T_{n-1},T_n,\mu$ correspond to $S,T,\mu$ in Theorem \ref{main claim}.
\item [$(c)$] For every $c:[T_n]^{<\aleph_0}\rightarrow \mu$  there is $S' \subseteq T_n$
isomorphic to $S$ \st\ $S'$ is $n$-end-uniform for $c$.
\end{enumerate}
By Theorem \ref{main claim} we can obviously construct such a sequence satisfying clauses $(a),(b)$, We will show by induction on $n$ that for this sequence also clause $(c)$ holds.  For $n=1$ this is Theorem \ref{main claim}. Assume correctness for $n$ and let $c:[T_{n+1}]^{<\aleph_0}\rightarrow \mu$. By $(b)$ there is $T' \subseteq T_{n+1}$ isomorphic to $T_n$ so that  $T'$ is end-uniform for $c$. Let $\varphi:T_n\rightarrow T'$ be an isomorphism and let $d:[T']^{<\aleph_0}\rightarrow{}^\omega(\mu\cup\{-1\})$ as follows: for $\bar\rho=\seq{\rho_1\dots\rho_k}$ where $\rho_1\ordtwo\rho_2\ordtwo\dots\ordtwo\rho_k$ and $m<\omega$
$$
d(\bar\rho)(m)=\left\{\begin{array}{ll}
 $$c(\bar\rho\conc\seq{\eta})$$ & \textrm{if $\bar\rho\conc\seq{\eta}\in h_m$ for some $\eta$}\\
$-1$ & \textrm{otherwise}
\end{array} \right. $$\\ $d$ is well defined as $T'$ is end-uniform for $c$,  and by defining $\varphi(\rho_1,\dots\rho_k)=(\varphi(\rho_1),\dots\varphi(\rho_k))$ for $\rho_1,\dots\rho_k\in T_n$ we have $f\circ d\circ\varphi:[T_n]^{<\aleph_0}\rightarrow\mu$, so by the induction hypothesis there is $T''\subseteq T_n$ isomorphic to $S$ so that $T''$ is $n$-end-uniform for $f\circ d\circ\varphi$. We claim that $S'=\varphi(T'')$ is isomorphic to $S$ and that $S'$ is $n+1$-end-uniform for $c$.  As $T''$ is isomorphic to $S$ and $\varphi$ is an isomorphism $S'$ is obviously isomorphic to $S$. Let the following sequences in $S'$ be similar,
$$\eta_1 \ordtwo<\eta_2 \ordtwo\ldots \ordtwo \eta_k
\ordtwo \rho'_1\ordtwo\ldots \ordtwo
\rho'_{n+1}
$$
$$
\eta_1 \ordtwo < \eta_2 \ordtwo\ldots
\ordtwo \eta_k \ordtwo
\rho''_1 \ordtwo <\ldots< \rho''_{n+1}
$$
So in $T''$ the following sequences are similar:
$$
\varphi^{-1}(\eta_1  \dots  \rho'_1\dots\rho'_n)=(\varphi^{-1}(\eta_1 )\varphi^{-1}( \rho'_1)\dots\varphi^{-1}(\rho'_n))
$$
$$
\varphi^{-1}(\eta_1  \dots  \rho''_1\dots\rho''_n)=(\varphi^{-1}(\eta_1 )\varphi^{-1}( \rho''_1)\dots\varphi^{-1}(\rho''_n))
$$
so $f\circ d\circ\varphi(\varphi^{-1}(\eta_1  \dots \eta_k, \rho'_1\dots\rho'_n))=f\circ d\circ\varphi(\varphi^{-1}(\eta_1  \dots \eta_k, \rho''_1\dots\rho''_n))$. Therefore we have $f(d(\eta_1  \dots \eta_k, \rho'_1\dots\rho'_n))=f(d(\eta_1  \dots \eta_k, \rho''_1\dots\rho''_n))$, and as $f$ is one-to-one, $d(\eta_1  \dots \eta_k, \rho'_1\dots\rho'_n)=d(\eta_1  \dots \eta_k, \rho''_1\dots\rho''_n)$, and therefore  $c(\eta_1  \dots \eta_k, \rho'_1\dots\rho'_{n+1})=c(\eta_1  \dots \eta_k, \rho''_1\dots\rho''_{n+1})$, and  $(\ast)_1$-$(\ast)_3$ are easily verified.
\end{proof}

\begin{definition}
\label{beths}
For cardinals $\lambda\geq\aleph_0$ and $\mu$ define $\beth_{1,\alpha}(\lambda,\mu)$ by induction on $\alpha$. $\beth_{1,0}(\lambda,\mu)=\beth_0(\lambda)=\lambda$, $\beth_{1,\alpha+1}(\lambda,\mu)=\beth_{\beth_{1,\alpha}(\lambda,\mu)^+}(\beth_{1,\alpha}(\lambda,\mu)+\mu)$, and for a limit ordinal $\alpha$ $\beth_{1,\alpha}(\lambda,\mu)=\sum\limits_{\beta<\alpha}\beth_{1,\beta}(\lambda,\mu)$.
\end{definition}

We end with a conclusion for scattered order types.
\begin{conclusion}
For a scattered order type $\varphi$, a cardinal $\mu$ and $n<\omega$, there is a scattered order type $\psi$ so that $\psi\rightarrow(\varphi)^{n}_{\mu,\aleph_0}$.
\end{conclusion}
\begin{proof} Given a scattered order type $\varphi$, a cardinal $\mu$ and $n<\omega$  by Observation  $\ref{obs-ord}(3)$ we can embed $\varphi$ in $(\rds(\alpha),<^3)$ for some ordinal $\alpha$. By Conclusion $\ref{conclusion}(*)_2$ above there is an ordinal $\lambda$ and a tree $T\subset\rds(\lambda)$ so that for every colouring $c:T^n\rightarrow\mu$ there is a subtree $S\subseteq T$ isomorphic to $\rds(\alpha)$ so that $c\rest S$ depends only on the equivalence class of similarity. Noting the above Observation, as $(T, <^3)$ is a scattered order, and as there are only $\aleph_0$ equivalence classes, we are done.
\end{proof}

\providecommand{\bysame}{\leavevmode\hbox to3em{\hrulefill}\thinspace}
\providecommand{\MR}{\relax\ifhmode\unskip\space\fi MR }
\providecommand{\MRhref}[2]{%
  \href{http://www.ams.org/mathscinet-getitem?mr=#1}{#2}
}
\providecommand{\href}[2]{#2}

\end{document}